\newtheorem{theorem}{Theorem}[section]
\newtheorem{corollary}[theorem]{Corollary}
\newtheorem{lemma}[theorem]{Lemma}
\newtheorem{remark}[theorem]{Remark}
\newtheorem{question}[theorem]{Question}
\theoremstyle{definition}
\newtheorem{definition}[theorem]{Definition}
\newcommand*{\sheafhom}{\mathscr{H}\kern -.5pt om}
\newcommand*{\sheaftor}{\mathscr{T}\kern -.5pt or}
\dedicatory{Dedicated to David Eisenbud and Francisco Javier Gallego}
\title{Syzygies of adjoint linear series on projective varieties}
\author{Purnaprajna Bangere}\author{Justin Lacini}
\date{}
\address{Department of Mathematics, University of Kansas, 1450 Jayhawk Blvd. , Lawrence, KS 66045, USA}
\email{jlacini@ku.edu}
\address{Department of Mathematics, University of Kansas, 1450 Jayhawk Blvd. , Lawrence, KS 66045, USA}
\email{purna@ku.edu}
\begin{document}

\begin{abstract}
Let $X$ be a smooth complex projective variety of dimension $n$ and let
$A$ be an ample and basepoint free divisor.
We prove $K_X+mA$ satisfies property $N_p$ for $m\geqslant n+1+p$. We also show the graded ring of sections $R(X,K_X+mA)$ is Koszul for $m\geqslant n+2$. 
\end{abstract}

\maketitle

\section{Introduction}\label{sectionintroduction}

Equations defining algebraic varieties have been a topic of interest to geometers for a long time. In the early eighties Mark Green \cite{green1, green2} brought a new perspective to the subject by viewing classical results on projective normality and normal presentation as particular cases of a more general phenomenon involving minimal free resolutions of the homogeneous coordinate ring. Green and Lazarsfeld proved beautiful results for the case of algebraic curves connecting the geometry of the embedding with the structure of the minimal resolution \cite{green1, greenlazarsfeld1, greenlazarsfeld2, greenlazarsfeld3}. 
More recently, much progress has been made in this direction (see for example \cite{voisin1, voisin2}).

\medskip

 A result of M. Green has attracted particular interest as it provides a path for generalizing the syzygy results on curves to higher dimensions. Let $L$ be a line bundle on a curve $C$ of genus $g$ with $\operatorname{deg}(L)\geqslant 2g+1+p$. Then $L$ satisfies property $N_p$, which is defined as follows. Let $S=\operatorname{Sym}^{\bullet} H^0(X, L)$ and consider the graded ring of sections $R=R(X,L)=\oplus_k H^0(X,L^{\otimes k})$ with the natural $S$-module structure. Let $E_{\bullet}$ be a minimal graded free resolution of $R$.

\begin{definition}\label{defnp}
The line bundle $L$ satisfies property $N_p$ if
\begin{enumerate}
    \item $E_0 = S$ if $p\geqslant 0$.
    \item $E_i = S(-i-1)^{\oplus b_i}$ for $1\leqslant i \leqslant p$.
\end{enumerate}
\end{definition}

Green's result therefore shows that a divisor that is as positive as $K_C + mA$ satisfies $N_p$ if $A$ is ample and $m \geqslant p+3$. 
A few years later Reider \cite{reider} proved that $K_S+mA$ is very ample for any $m\geqslant 4$ 
if $A$ is an ample divisor on a smooth algebraic surface $S$. Following Reider's work,
Mukai conjectured that $K_S+mA$ satisfies $N_p$ 
for any $m\geqslant p+4$ if $A$ is ample. Mukai's conjecture is in general open even for $p=0$. Some work has been done in this direction. A stronger version of the conjecture has been proved for anti-canonical rational surfaces in \cite{purna5}, 
a generic version has been proved for surfaces of general type when $A$ is ample and basepoint free in \cite{purna1, purna2} and weaker bounds have been obtained for surfaces with Kodaira dimension zero in \cite{purna1}. For ruled varieties see \cite{butler, park, andreattasommese, purna3} among others.

\medskip

Fujita famously conjectured that
$K_X+mA$ is very ample for any $m\geqslant n+2$ if $A$ is an ample divisor on a projective variety $X$ of dimension $n$. Motivated by these circle of ideas and conjectures, Ein and Lazarsfeld \cite{einlazarsfeld1} proved the following elegant result:
if $A$ is a very ample line bundle on $X$ then $K_X+mA$ satisfies
$N_p$ for any $m\geqslant n+1+p$. It has been an open question ever since (see \cite[Section 4]{einlazarsfeld1}) whether the analogous result holds if $A$ is just ample and basepoint free. 
The purpose of this article is to give a positive answer to this question:

\begin{theorem}\label{intro1}
Let $X$ be a smooth complex projective variety of dimension $n$ and let $A$ be an ample and basepoint free divisor.
Then the line bundle
$L_m =\mathcal{O}_X(K_X+mA)$ satisfies property $N_p$ for any $m\geqslant n+1+p$.
\end{theorem}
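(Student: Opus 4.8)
The plan is to reformulate property $N_p$ as a vanishing statement for the kernel bundle and then establish that vanishing by induction on $n = \dim X$. Write $V = H^0(X, L_m)$ and let $M_{L_m}$ be the kernel of evaluation, so that
$$0 \to M_{L_m} \to V \otimes \mathcal{O}_X \to L_m \to 0.$$
Since $A$ is basepoint free, $L_m$ is globally generated for $m \ge n+1$, so $M_{L_m}$ is a vector bundle and the standard Koszul-cohomology criterion applies: $L_m$ satisfies $N_p$ provided
$$H^1\bigl(X, \wedge^{i+1} M_{L_m} \otimes L_m^{\otimes q}\bigr) = 0 \quad \text{for all } 0 \le i \le p \text{ and } q \ge 1,$$
the case $i = 0$ being projective normality. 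First I would reduce the whole theorem to proving these vanishings.

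For the induction, the base case $n = 1$ is Green's theorem: a basepoint free $A$ of degree $1$ forces $C = \mathbb{P}^1$, so for $g \ge 1$ we have $\deg A \ge 2$ and hence $\deg(K_C + mA) \ge 2g + 1 + p$ once $m \ge 2 + p$. For the inductive step I would pick a general $D \in |A|$, which is smooth by Bertini, with $A_D := A|_D$ again ample and basepoint free. Adjunction gives $L_m|_D = K_D + (m-1)A_D$ with $m - 1 \ge (n-1) + 1 + p$, so by the inductive hypothesis $L_m|_D$ satisfies $N_p$ on $D$, i.e.\ the analogous vanishings hold on $D$. The goal is then to propagate the vanishing from $D$ up to $X$.

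To propagate, I would restrict the evaluation sequence to $D$ and compare $M_{L_m}|_D$ with $M_{L_m|_D}$; the map $H^0(X, L_m) \to H^0(D, L_m|_D)$ is surjective because $H^1(X, L_m(-A)) = H^1(X, K_X + (m-1)A) = 0$ by Kodaira vanishing, and the sequence $0 \to \mathcal{O}_X(-A) \to \mathcal{O}_X \to \mathcal{O}_D \to 0$ then relates the two kernel bundles. The vanishing on $X$ would follow from that on $D$ together with the twisted vanishings $H^1\bigl(X, \wedge^{i+1}M_{L_m} \otimes L_m^{\otimes q}(-A)\bigr) = 0$, which I would attack with the filtration $0 \to \wedge^{k}M_{L_m} \to \wedge^{k}V \otimes \mathcal{O}_X \to \wedge^{k-1}M_{L_m}\otimes L_m \to 0$ to strip off exterior powers and land on Kawamata--Viehweg vanishing for adjoint bundles, with the delicate point being to make the arithmetic close at the boundary $m = n+1+p$.

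The hard part will be exactly this propagation, and it is where Ein--Lazarsfeld used very ampleness: when $A$ embeds $X$, a general $D \in |A|$ is cut by an ambient hyperplane and the two kernel bundles $M_{L_m}|_D$ and $M_{L_m|_D}$ differ by a transparent, linearly controlled term, whereas for merely basepoint free $A$ the morphism $\phi_A$ is only finite onto its image and that control is lost. I expect the crux to be a new vanishing theorem for $\wedge^{i+1} M_{L_m}$ (equivalently for suitable twists of $M_A$) valid under basepoint-freeness alone. A natural route, replacing the missing embedding, is to pass through the finite flat morphism $f \colon X \to \mathbb{P}^n$ given by $n+1$ general sections of $A$, using $H^j(X,-) = H^j(\mathbb{P}^n, f_{*}-)$ together with Grothendieck duality $f_{*}\omega_X = (f_{*}\mathcal{O}_X)^{\vee}(-n-1)$ to convert the required vanishings into Castelnuovo--Mumford regularity statements for vector bundles on $\mathbb{P}^n$.
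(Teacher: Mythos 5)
Your reduction to kernel-bundle vanishing, the curve base case, and the setup of the induction (a smooth general $D\in|A|$ via Bertini, adjunction, and surjectivity of $H^0(X,L_m)\to H^0(D,L_m|_D)$ by Kodaira) are all sound. The gap is exactly the step you yourself flag as ``the hard part,'' and it is not a technical detail but the entire content of the theorem: the twisted vanishing $H^1\bigl(X,\wedge^{i+1}M_{L_m}\otimes L_m^{\otimes q}(-A)\bigr)=0$. The filtration $0\to\wedge^{k}M_{L_m}\to\wedge^{k}V\otimes\mathcal{O}_X\to\wedge^{k-1}M_{L_m}\otimes L_m\to 0$ does not reduce this to Kawamata--Viehweg: since $H^1(X,L_m^{\otimes q}(-A))=0$, the long exact sequence identifies $H^1\bigl(X,\wedge^{i+1}M_{L_m}\otimes L_m^{\otimes q}(-A)\bigr)$ with the cokernel of the multiplication map $\wedge^{i+1}V\otimes H^0\bigl(L_m^{\otimes q}(-A)\bigr)\to H^0\bigl(\wedge^{i}M_{L_m}\otimes L_m^{\otimes q+1}(-A)\bigr)$, and such surjectivity statements carrying a negative twist of $A$ are given neither by your inductive hypothesis (property $N_p$ on $D$ only controls the untwisted groups $H^1\bigl(D,\wedge^{c}M_{L_m|_D}\otimes(L_m|_D)^{\otimes q}\bigr)$) nor by any adjoint-type vanishing theorem; iterating the restriction to kill these terms generates twists $-2A,-3A,\dots$ that destroy the adjoint form of the line bundle, so the induction never closes at the boundary $m=n+1+p$. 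This is precisely the point where Ein--Lazarsfeld exploit very ampleness: for them the kernel bundle of $A$ is $\Omega_{\mathbb{P}}(1)|_X$, an ambient bundle with known cohomology, and that control evaporates when $A$ is merely basepoint free.

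Your closing suggestion --- push forward under the finite flat map $f\colon X\to\mathbb{P}^n$ and apply duality --- does not repair this, because $\wedge^{i+1}M_{L_m}$ is not pulled back from $\mathbb{P}^n$: the projection formula gives no description of $f_*\bigl(\wedge^{i+1}M_{L_m}\otimes L_m^{\otimes q}\bigr)$, so nothing converts into a regularity statement downstairs. The paper's proof, by contrast, never touches $M_{L_m}$ at all. It verifies Green's criterion on the Cartesian products $X(r)$, i.e.\ the vanishing of $H^1$ against the diagonal ideal sheaves $\mathcal{I}_{\Delta_{X(r)}}^1(r)$; the only kernel bundle used is the rank-$n$ bundle $M_V=\alpha^*\Omega_{\mathbb{P}^n}(1)$ attached to the finite flat map $\alpha$ defined by an $(n+1)$-dimensional basepoint free subspace $V\subseteq H^0(X,A)$. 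Pulling back Beilinson's resolution of the diagonal resolves the ideals of the \emph{enlarged} diagonals $\Gamma_{X(r)}$ (fiber products over $\mathbb{P}^n$), for which Kodaira vanishing and K\"unneth give the required $H^1$ vanishing (Theorem \ref{mainvanishing}); the genuinely new ingredient is then the passage from $\Gamma$ back to the true diagonal $\Delta$, achieved by taking the Galois closure $Y$ of $X/\mathbb{P}^n$ and using the trace splitting from duality for finite morphisms to show that sections on $\Gamma$ surject onto sections on $\Delta$ (Lemma \ref{finiterestriction} and the subsequent patching induction). You do invoke the finite map to $\mathbb{P}^n$ and duality, but you apply them to the wrong object; without an ingredient playing the role of the enlarged-diagonal-plus-trace argument (or a genuinely new vanishing theorem for $\wedge^{i+1}M_{L_m}$ with negative twists), your induction cannot be completed.
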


Relaxing the positivity assumption on $A$ from very ample to ample and basepoint free poses significant challenges
and our methods are very different from those of \cite{einlazarsfeld1}. Results concerning $N_0$ and $N_1$ were obtained in \cite{debajayan} for varieties with $K_X$ nef with methods similar to \cite{purna2}, and optimal bounds for property $N_p$ have been proved for abelian varieties \cite{pareschiabelian} and Calabi-Yau varieties \cite{purna6,niu}.

\medskip

Whenever property $N_1$ holds, it is a natural question to ask if the homogeneous ring of sections is Koszul. This topic has long been of interest to algebraists and geometers alike. Let $R$ be a graded $\mathbb{C}$-algebra, and let $E_{\bullet}$ be the minimal resolution of $\mathbb{C}$ as an $R$-module. 

\begin{definition}
The ring $R$ is called Koszul if 
$E_i=R(-i)^{\oplus b_i}$ for any $i\geqslant 1$.
\end{definition}

Pareschi \cite{pareschikoszul} adapted the methods of \cite{einlazarsfeld1} to prove that if $A$ is very ample, then $R(X,K_X+mA)$ is Koszul for any $m\geqslant n+2$.
It has been an open question if the same holds when $A$ is just ample and basepoint free. As a corollary of the methods used to prove the main theorem, we obtain the following:

\begin{theorem}\label{intro2}
Let $X$ be a smooth complex projective variety of dimension $n$ and let $A$ be an ample and basepoint free divisor.
Then the graded ring of sections $R(X,K_X+mA)$ is Koszul for any $m\geqslant n+2$.
\end{theorem}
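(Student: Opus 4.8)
The plan is to deduce Koszulness from the same kernel-bundle vanishing statements that drive the proof of Theorem~\ref{intro1}, but upgraded from exterior powers to tensor powers. Write $L=L_m=\mathcal O_X(K_X+mA)$ and let $M=M_L$ be the kernel bundle, defined by the evaluation sequence
\[
0\longrightarrow M\longrightarrow H^0(X,L)\otimes\mathcal O_X\longrightarrow L\longrightarrow 0 .
\]
Since $m\ge n+2=n+1+1$, Theorem~\ref{intro1} with $p=1$ gives that $L$ satisfies $N_1$; in particular $R=R(X,L)$ is projectively normal and its defining ideal is generated by quadrics, so $R$ is a quadratic algebra. The first reduction is to invoke the cohomological criterion that upgrades a quadratic section ring to a Koszul one, namely the criterion underlying Pareschi's argument in \cite{pareschikoszul}: it suffices to establish the tensor-power vanishings
\[
H^1\bigl(X,\ M^{\otimes a}\otimes L^{\otimes b}\bigr)=0\qquad\text{for all }a\ge 1,\ b\ge 1 .
\]

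The engine for these vanishings is an induction on $a$ built from the evaluation sequence tensored by $F:=M^{\otimes(a-1)}\otimes L^{\otimes b}$,
\[
0\longrightarrow M^{\otimes a}\otimes L^{\otimes b}\longrightarrow H^0(X,L)\otimes F\longrightarrow M^{\otimes(a-1)}\otimes L^{\otimes(b+1)}\longrightarrow 0 .
\]
Passing to cohomology, $H^1(M^{\otimes a}\otimes L^{\otimes b})$ vanishes provided (i) $H^1(M^{\otimes(a-1)}\otimes L^{\otimes b})=0$, which will be the inductive hypothesis, and (ii) the multiplication map $H^0(X,L)\otimes H^0(F)\to H^0(F\otimes L)$ is surjective. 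Both (i) and the surjectivity (ii) follow from a single Castelnuovo--Mumford regularity statement: if $F$ is $0$-regular with respect to $L$, that is $H^i(F\otimes L^{\otimes(-i)})=0$ for all $i\ge1$, then $F$ is $1$-regular (whence $H^1(F)=0$), it is globally generated, and the multiplication maps $H^0(F\otimes L^{\otimes k})\otimes H^0(L)\to H^0(F\otimes L^{\otimes(k+1)})$ are surjective for $k\ge0$. Thus the whole argument reduces to the higher-cohomology vanishings
\[
H^i\bigl(X,\ M^{\otimes a}\otimes L^{\otimes c}\bigr)=0\qquad (i\ge 1)
\]
for $c$ in the appropriate range, which is exactly the shape of statement established in the body of the paper in the course of proving Theorem~\ref{intro1}.

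These vanishings I would obtain by the same descent used earlier: choosing a general $Y\in|A|$, smooth by Bertini since $A$ is basepoint free, and using adjunction $K_Y=(K_X+A)|_Y$ to identify the restriction of the adjoint bundle on $X$ with an adjoint bundle of the same shape on $Y$, one restricts the evaluation sequence to $Y$, compares $M_L|_Y$ with $M_{L|_Y}$, and runs an induction on $\dim X$ fed by the Kodaira and Kawamata--Viehweg-type vanishing already in force. The crucial structural point is that restriction and the comparison sequences are natural with respect to tensor products, so the descent applies to $M^{\otimes a}$ for every $a$ by the same formal manipulation.

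The main obstacle is precisely this uniformity in $a$. For property $N_p$ one only needs to control the exterior powers $\wedge^{j}M$ for the finitely many values $1\le j\le p+1$; in characteristic zero each such exterior power is a direct summand of $M^{\otimes j}$, so the $N_p$ result uses only a bounded piece of the tensor algebra on $M$. Koszulness, by contrast, requires the vanishing for $M^{\otimes a}$ with $a$ arbitrarily large, hence for \emph{all} Schur functors $\mathbb S_\lambda M$ of unbounded size. The difficulty is to make the inductive vanishing machinery close simultaneously for every $a$: the base cases of the induction on dimension must be verified uniformly in $a$, and the positivity budget supplied by the $m\ge n+2$ twist must be shown to absorb an arbitrary number of factors of $M$ without degrading as one descends through the dimension induction. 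Establishing this uniform vanishing is where the real work lies; once it is in hand, the regularity bootstrap together with the criterion from \cite{pareschikoszul} yields Koszulness at once.
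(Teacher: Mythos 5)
Your reduction of Koszulness to the vanishings $H^1(X, M_L^{\otimes a}\otimes L^{\otimes b})=0$ for all $a,b\geqslant 1$ is a legitimate known criterion (it is indeed the one underlying \cite{pareschikoszul}), and the bootstrap via the evaluation sequence and Castelnuovo--Mumford regularity is fine as far as it goes. But the proposal has a genuine gap, and you name it yourself: nothing in your argument establishes the required vanishings uniformly in $a$, and your claim that they are ``exactly the shape of statement established in the body of the paper'' is false. The vanishing theorems proved in the paper (Lemmas \ref{vanishing2}--\ref{vanishing5}, Theorem \ref{mainvanishing}) concern exterior powers $\wedge^{a}M_V$ of the \emph{rank-$n$} bundle $M_V=\alpha^*\Omega_{\mathbb{P}^n}(1)$ attached to a finite flat cover $\alpha:X\rightarrow\mathbb{P}^n$ defined by an $(n+1)$-dimensional basepoint free subspace $V\subseteq H^0(X,A)$ --- not tensor powers of the kernel bundle $M_L$ of the adjoint bundle itself, whose rank is huge and whose unbounded tensor powers are exactly where your induction fails to close. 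Since $M_V$ has rank $n$, only the finitely many exterior powers $\wedge^{a}M_V$ with $0\leqslant a\leqslant n$ ever occur; the unboundedness needed for Koszulness is absorbed not by powers of a bundle on $X$ but by the number $r$ of factors in the product $X(r)$, where each K\"unneth factor again involves only $\wedge^{a_t}M_V$ with $a_t\leqslant n$. This is precisely how the paper sidesteps the uniformity problem you run into.

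Concretely, the paper's route is different from yours at every step after the statement: it verifies the Mehta--Inamdar criterion (Theorem \ref{mehtadiagonal}), which asks for $H^1(X(r), \mathcal{I}_{\Delta_{X(r)}}^2(r)\otimes L_m\boxtimes\cdots\boxtimes L_m^{\otimes k})=0$ for all $r\geqslant 2$ and $k\geqslant 1$; it proves the analogous vanishing for the \emph{enlarged} diagonal ideals $\mathcal{I}_{\Gamma_{X(r)}}^2(r)$ (pullbacks of the $\mathbb{P}^n$-diagonal ideals, resolved by pulling back Beilinson's resolution) when $m\geqslant n+2$ (Theorem \ref{mainvanishing}); and it then descends from the enlarged diagonals $\Gamma$ to the true diagonals $\Delta$ by a trace/Galois-closure argument (Lemma \ref{finiterestriction} and the induction in the final subsection), which produces the surjections $H^0(\Gamma_{X(r)}^2(r),F)\rightarrow H^0(\Delta_{X(r)}^2(r),F)$ needed to convert the $\Gamma$-vanishing into the $\Delta$-vanishing. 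None of these three ingredients appears in your proposal, and the one ingredient your proposal needs --- vanishing for arbitrary tensor powers $M_L^{\otimes a}$ when $A$ is merely ample and basepoint free --- is not supplied by the paper and is, as you admit, ``where the real work lies.'' A reduction of the theorem to an unproven statement of comparable difficulty is not a proof.
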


Based on Fujita's conjecture, \cite{einlazarsfeld1} and the present results, we conclude by asking the following
natural question.

\begin{question}\label{quest1}
Let $X$ be a smooth complex projective variety of dimension $n$ and let $A$ be an ample divisor.
Does the line bundle
$L_m =\mathcal{O}_X(K_X+mA)$ satisfy property $N_p$ for all $m\geqslant n+2+p$?
\end{question}

We point out that at the moment this seems
to be completely out of reach, 
and it is not even known 
in the special case when $n=2$ and $p=0$.

\medskip

\textbf{Acknowledgements.} We thank Lawrence Ein for his interest, helpful discussions and encouragement. 

\section{Notation and conventions}

We work over the field of complex numbers $\mathbb{C}$. Of course, everything holds verbatim over any algebraically closed field of characteristic zero.
We use Definition \ref{defnp}
to define the property $N_p$ even when $L$ is just globally
generated. 
A pair $(X,\Delta)$ is the datum of a normal variety $X$ and a $\mathbb{Q}$-Weil divisor $\Delta$ such that $K_X+\Delta$ is $\mathbb{Q}$-Cartier. If $\Delta\geqslant 0$, we say that $(X,\Delta)$ is a log pair. A log resolution of $(X,\Delta)$ is a projective morphism
$f:Y\rightarrow X$ such that $Y$ is smooth and $\operatorname{Ex}(f)\cup \operatorname{Supp}(f^{-1}(\Delta))$ is a simple normal crossings divisor. A $\mathbb{Q}$-Cartier divisor $D$ is nef if $D\cdot C\geqslant 0$ for any curve $C\subseteq X$. If $L_1$ and $L_2$ are two sheaves on $X$, we denote by $L_1\boxtimes L_2$ the sheaf $\operatorname{pr}_1 ^*L_1 \otimes \operatorname{pr}_2^* L_2$ on $X\times X$. We use analogous notation in the case of multiple products and in the case of divisors. 

\section{Preliminaries}

In this section we collect a couple of basic results that we use in the proof
of Theorem \ref{intro1} and Theorem \ref{intro2}. We start with some elementary commutative algebra.

\subsection{Products and diagonals.}\label{algebra}
Let $A$ be a local Noetherian $\mathbb{C}$-algebra
of dimension $n$.
Let $A(r)=A\otimes_{\mathbb{C}}\cdots \otimes_{\mathbb{C}}A$
be the tensor product of $r$ copies of $A$. For
$1\leqslant j \leqslant r$, let $i_j:A\rightarrow A(r)$
be the natural inclusion and let $m_j = i_j(m)\cdot A(r)$.

\begin{lemma}\label{tor1}
Fix $1\leqslant j \neq k \leqslant r$. 
In the above notation, we have:
\begin{enumerate}
    \item $\operatorname{Tor}_i ^{A(r)} (A(r)/m_j, m_k)=0$ for any $i>0$.
    \item $\operatorname{Tor}_0 ^{A(r)}(A(r)/m_j, m_k)=m_k \otimes_{A(r)} A(r)/m_j \cong m_k\cdot A(r)/m_j$.
    \item $\operatorname{Tor}_i ^{A(r)}(A(r)/m_j, A(r)/m_k)=0$
    for any $i>0$.
    \item $\operatorname{Tor}_i ^{A(r)}(m_j, m_k)=0$ for any $i>0$.
\end{enumerate}
\end{lemma}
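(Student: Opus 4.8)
The plan is to exploit the external tensor product structure of $A(r)$ together with a Künneth formula for $\operatorname{Tor}$ over a tensor product of $\mathbb{C}$-algebras. Fix $j\neq k$ and group the factors as $A(r)=A\otimes_{\mathbb{C}}D$, where the first factor is the $j$-th copy of $A$ and $D=\bigotimes_{l\neq j}A$ collects the remaining $r-1$ copies. Because $\otimes_{\mathbb{C}}$ is exact, the ideal $m_j=i_j(m)\cdot A(r)$ is the external tensor product $m\otimes_{\mathbb{C}}D$, so that $A(r)/m_j\cong (A/m)\otimes_{\mathbb{C}}D$ as an $A(r)$-module. Crucially, since $k\neq j$, the ideal $m_k$ does not involve the $j$-th factor: it has the form $m_k=A\otimes_{\mathbb{C}}\mathfrak{n}$ and $A(r)/m_k=A\otimes_{\mathbb{C}}(D/\mathfrak{n})$, where $\mathfrak{n}\subseteq D$ is the ideal generated by the image of $m$ in the copy of $A$ sitting in the $k$-th slot of $D$.

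The engine is the following Künneth isomorphism: for $A$-modules $M,M'$ and $D$-modules $N,N'$,
\[
\operatorname{Tor}_i^{A\otimes_{\mathbb{C}}D}\bigl(M\otimes_{\mathbb{C}}N,\; M'\otimes_{\mathbb{C}}N'\bigr)\;\cong\;\bigoplus_{p+q=i}\operatorname{Tor}_p^{A}(M,M')\otimes_{\mathbb{C}}\operatorname{Tor}_q^{D}(N,N').
\]
I would prove this by taking free resolutions $P_{\bullet}\to M$ over $A$ and $Q_{\bullet}\to N$ over $D$; since $\mathbb{C}$ is a field, $\otimes_{\mathbb{C}}$ is exact and the Künneth theorem for complexes carries no $\operatorname{Tor}$ correction terms, so $P_{\bullet}\otimes_{\mathbb{C}}Q_{\bullet}$ is a free resolution of $M\otimes_{\mathbb{C}}N$ over $A\otimes_{\mathbb{C}}D$. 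Tensoring with $M'\otimes_{\mathbb{C}}N'$ and identifying $(P_p\otimes_{\mathbb{C}}Q_q)\otimes_{A\otimes_{\mathbb{C}}D}(M'\otimes_{\mathbb{C}}N')\cong (P_p\otimes_{A}M')\otimes_{\mathbb{C}}(Q_q\otimes_{D}N')$ reduces the claim to a second application of Künneth over $\mathbb{C}$.

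With this in hand, each of (1), (3), (4) follows by inspection. In every case the first module is built from the $j$-th factor (it is $A/m$ or $m$ in the $A$-slot) while its $D$-slot is all of $D$; and the second module ($m_k$ or $A(r)/m_k$) is all of $A$ in the $A$-slot. Hence in the Künneth sum the $A$-slot contributes $\operatorname{Tor}_p^{A}(-,A)$ and the $D$-slot contributes $\operatorname{Tor}_q^{D}(D,-)$, each of which vanishes for positive index because one argument is free. Only the $(p,q)=(0,0)$ term survives, giving the vanishing of $\operatorname{Tor}_i$ for $i>0$ in (1), (3), (4). For (2), the equality $\operatorname{Tor}_0^{A(r)}(A(r)/m_j,m_k)=m_k\otimes_{A(r)}A(r)/m_j$ is just the definition of $\operatorname{Tor}_0$; to identify it with $m_k\cdot A(r)/m_j$ I would tensor the short exact sequence $0\to m_k\to A(r)\to A(r)/m_k\to 0$ with $A(r)/m_j$ and read off that the kernel of the natural surjection onto $m_k\cdot A(r)/m_j$ is $\operatorname{Tor}_1^{A(r)}(A(r)/m_j,A(r)/m_k)$, which vanishes by (3).

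The one genuinely delicate point is the bookkeeping behind the Künneth reduction, namely checking that $m_j$ and $m_k$ really are the claimed external tensor products and that the freeness occurs in complementary slots; this is exactly where the hypothesis $j\neq k$ enters. I expect the Künneth formula itself to be routine over a field. I note in particular that the residue field $A/m$ never needs to equal $\mathbb{C}$, since the vanishing is driven entirely by the free argument in each slot rather than by any property of $A/m$.
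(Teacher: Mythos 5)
Your proof is correct, and it reaches the lemma by a genuinely different route than the paper's. The paper never sets up a full K\"unneth formula: it resolves only one module, taking a free resolution $F_{\bullet}$ of $m$ over $A$, base-changing along the flat inclusion $i_k$ to obtain a free resolution of $m_k$ over $A(r)$, and then observing that tensoring with $A(r)/m_j \cong A(r-1)$ returns the complex $F_{\bullet}\otimes_A A(r-1)$, which is again exact by flatness; this gives (1) and (2) directly, and (3) and (4) are then bootstrapped from (1)--(2) by tensoring the short exact sequence $0\rightarrow m_j\rightarrow A(r)\rightarrow A(r)/m_j\rightarrow 0$ with $A(r)/m_k$ and with $m_k$. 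Your K\"unneth engine instead proves (1), (3), (4) uniformly in one stroke, and uses a short exact sequence only for the identification in (2) --- the mirror image of the paper's logical order. Both arguments rest on the same underlying mechanism (exactness of $\otimes_{\mathbb{C}}$ and flat base change of resolutions), but your packaging buys two things: a symmetric treatment of all four statements, and, as you point out, independence from the residue field --- the paper's displayed isomorphism $A(r)/m_j\cong A(r-1)$ silently uses $A/m\cong\mathbb{C}$, which is harmless in the paper's application (there $A$ is the local ring of a point on a complex variety) but is an extra hypothesis nonetheless. The price is the double-complex bookkeeping behind the K\"unneth formula, which the paper's leaner one-resolution argument avoids.
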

\begin{proof}
First, notice that 
\[
A(r)/m_j \cong (A/m)\otimes_{\mathbb{C}} A(r-1) \cong A(r-1)
\]

Now let $F_{\bullet}$ be a free resolution of $m$ as an $A$-module. Since $i_k : A\rightarrow A(r)$ is flat,
we have that $F_{\bullet}\otimes_A A(r)$ is a free resolution of $m\otimes_A A(r)\cong m_k$ as an $A(r)$-module. Therefore, we
may compute $\operatorname{Tor}_i ^{A(r)} (A(r)/m_j, m_k)$ by taking the
tensor product with $A(r)/m_j$. In light of the isomorphism
$A(r-1)\cong A(r)/m_j$, we get 
\[
\left( F_{\bullet}\otimes_A A(r)\right) \otimes_{A(r)} A(r)/m_j \cong F_{\bullet}\otimes_A A(r-1)
\]

Since this is exact, we get $(1)$ and $(2)$. 
Consider now the short exact sequence
\[
0\rightarrow m_j\rightarrow A(r) \rightarrow A(r)/m_j \rightarrow 0
\]

Taking tensor products with $A(r)/m_k$ and with $m_k$ gives $(3)$ and $(4)$ respectively. 
\end{proof}

\begin{corollary}\label{ideal1}
In the above notation, we have
\[
m_j\otimes_{A(r)} m_k \cong m_j \cdot m_k = m_j \cap m_k
\]
\end{corollary}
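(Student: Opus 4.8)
The plan is to deduce both claimed isomorphisms directly from the Tor-vanishing statements in Lemma \ref{tor1}, via two standard applications of the long exact sequence in Tor.

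First I would establish the isomorphism $m_j\otimes_{A(r)} m_k\cong m_j\cdot m_k$. Consider the short exact sequence
\[
0\rightarrow m_k\rightarrow A(r)\rightarrow A(r)/m_k\rightarrow 0
\]
and apply $m_j\otimes_{A(r)}-$. Since $\operatorname{Tor}$ is symmetric in its two arguments, Lemma \ref{tor1}(1), with the roles of $j$ and $k$ exchanged (permissible as $j\neq k$), gives $\operatorname{Tor}_1^{A(r)}(m_j, A(r)/m_k)=0$. The resulting exact sequence therefore reads
\[
0\rightarrow m_j\otimes_{A(r)} m_k\rightarrow m_j\otimes_{A(r)} A(r)\rightarrow m_j\otimes_{A(r)} A(r)/m_k\rightarrow 0,
\]
and under the identification $m_j\otimes_{A(r)} A(r)\cong m_j$ the left-hand map is the multiplication map $x\otimes y\mapsto xy$. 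Being injective with image $m_j\cdot m_k$, it identifies $m_j\otimes_{A(r)} m_k$ with $m_j\cdot m_k$.

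Next I would prove $m_j\cdot m_k = m_j\cap m_k$. Here the key is the standard homological identity: for any ring $R$ and ideals $I,J$ there is a natural isomorphism $\operatorname{Tor}_1^{R}(R/I, R/J)\cong (I\cap J)/(I\cdot J)$, obtained by tensoring $0\rightarrow J\rightarrow R\rightarrow R/J\rightarrow 0$ with $R/I$ and using $R/I\otimes_R J\cong J/IJ$ together with the description of the induced map $J/IJ\rightarrow R/I$. Applying this with $R=A(r)$, $I=m_j$, $J=m_k$, and invoking the vanishing $\operatorname{Tor}_1^{A(r)}(A(r)/m_j, A(r)/m_k)=0$ from Lemma \ref{tor1}(3), we obtain $(m_j\cap m_k)/(m_j\cdot m_k)=0$, that is $m_j\cap m_k = m_j\cdot m_k$.

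I do not expect any serious obstacle: both parts are formal consequences of the Tor computations already in hand. The only points requiring care are bookkeeping ones, namely correctly invoking the symmetry of $\operatorname{Tor}$ to convert Lemma \ref{tor1}(1) into the vanishing of $\operatorname{Tor}_1^{A(r)}(m_j, A(r)/m_k)$, and identifying the relevant map in the first sequence with honest multiplication so that its image is literally $m_j\cdot m_k$ rather than merely an abstractly isomorphic module.
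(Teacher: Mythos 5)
Your proof is correct and takes essentially the same route as the paper: both equalities are read off from the long exact Tor sequence of a short exact sequence $0\rightarrow m\rightarrow A(r)\rightarrow A(r)/m\rightarrow 0$, with the vanishings of Lemma \ref{tor1} (1) and (3) killing the obstruction terms. The only cosmetic differences are that you tensor the sequence for $m_k$ (invoking the symmetry of Tor) where the paper tensors the one for $m_j$, and that you package the second equality as the standard identity $\operatorname{Tor}_1^{R}(R/I,R/J)\cong (I\cap J)/(I\cdot J)$, whose derivation is exactly the computation the paper performs directly.
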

\begin{proof}
Consider once again the short exact sequence
\[
0\rightarrow m_j \rightarrow A(r)\rightarrow A(r)/m_j \rightarrow 0
\]

Then $m_j \cdot m_k = m_j \cap m_k$ follows by taking the tensor
product with $A/m_k$ and using Lemma \ref{tor1} (3), whereas
$m_j \otimes_{A(r)} m_k \cong m_j \cdot m_k$ follows by taking
the tensor product with $m_k$ and using Lemma \ref{tor1} (1). 
\end{proof}

\begin{lemma}\label{localpicture}
Fix $1\leqslant s\leqslant r$ and set 
$m_r(s)=m_1\otimes_{A(r)} \cdots \otimes_{A(r)} m_s$. Then
\begin{enumerate}
    \item $\operatorname{Tor}_i ^{A(r)} (A(r)/m_r(s), m_r)=0$ for all $s\leqslant r-1$ and $i>0$.
    \item $\operatorname{Tor}_i ^{A(r)}(m_r(s), A(r)/m_r)=0$
    for all $s\leqslant r-1$ and $i>0$.
    \item $\operatorname{Tor}_i ^{A(r)} (A(r)/m_r(s), A(r)/m_r)=0$ for all $s\leqslant r-1$ and $i>0$.
    \item $\operatorname{Tor}_i ^{A(r)} (m_r(s), m_r)=0$
    for all $s\leqslant r-1$ and $i>0$.
    \item $m_r(s)\cong m_1\cdots m_s = m_1 \cap \cdots \cap m_s$ for all $s\leqslant r$.
\end{enumerate}
\end{lemma}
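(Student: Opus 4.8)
The plan is to prove all five statements simultaneously by induction on $s$, the number of factors in the product $m_r(s)$, treating the last statement (5) as the structural heart from which the four Tor vanishings follow. Throughout I write $A(r) = A^{(1)} \otimes_{\mathbb{C}} \cdots \otimes_{\mathbb{C}} A^{(r)}$ and exploit the symmetry of $A(r)$ under permuting the $r$ factors, so that $m_r$ together with the product of $s$ of the remaining ideals may be analyzed with any chosen index in the role of the special factor. The base case $s=1$ is immediate: $m_r(1)=m_1$ makes (5) trivial, and (1)--(4) are precisely the four vanishings of Lemma \ref{tor1} with $j=1$, $k=r$ (using symmetry of $\operatorname{Tor}$ for the asymmetric case (2)).

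The engine of the induction is the following observation. For $s\leqslant r-1$ none of the indices $1,\dots,s$ equals $r$, so decomposing $A(r)=A^{(r)}\otimes_{\mathbb{C}}B$ with $B=\bigotimes_{j\neq r}A^{(j)}\cong A(r-1)$, each $m_j$ with $j\leqslant s$ is extended from an ideal $\widetilde m_j\subseteq B$, i.e.\ $m_j=A^{(r)}\otimes_{\mathbb{C}}\widetilde m_j$. Since $A^{(r)}\otimes_{\mathbb{C}}(-)$ is exact and commutes with finite intersections, the inductive hypothesis (5) at level $s-1$ identifies $m_r(s-1)=m_1\cap\cdots\cap m_{s-1}=A^{(r)}\otimes_{\mathbb{C}}(\widetilde m_1\cap\cdots\cap\widetilde m_{s-1})$, a \emph{free}, hence flat, $A^{(r)}$-module; the same argument shows it is flat over every factor $A^{(t)}$ with $t\notin\{1,\dots,s-1\}$. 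To deduce (5) at level $s$, I note $m_r(s)=m_r(s-1)\otimes_{A(r)}m_s$ and, taking the special index to be $s$, use the extension-of-scalars computation along $i_s\colon A\to A(r)$ to get $\operatorname{Tor}_i^{A(r)}(m_r(s-1),A(r)/m_s)\cong\operatorname{Tor}_i^{A}(m_r(s-1)|_{i_s},A/m)=0$ for $i>0$, the module being flat over $A^{(s)}$. Tensoring $0\to m_s\to A(r)\to A(r)/m_s\to 0$ with $m_r(s-1)$ then shows, exactly as in Corollary \ref{ideal1}, that $m_r(s)\cong m_r(s-1)\cdot m_s$; and the elementary identity $(V\otimes W_1)\cap(V_1\otimes W)=V_1\otimes W_1$ for subspaces of a tensor product of $\mathbb{C}$-vector spaces gives $m_r(s-1)\cdot m_s=m_r(s-1)\cap m_s=m_1\cap\cdots\cap m_s$, completing (5).

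With (5) in hand at level $s$, statements (1)--(4) follow uniformly. Writing $\mathfrak a=\widetilde m_1\cap\cdots\cap\widetilde m_s$, we have $m_r(s)=A^{(r)}\otimes_{\mathbb{C}}\mathfrak a$ and $A(r)/m_r(s)=A^{(r)}\otimes_{\mathbb{C}}(B/\mathfrak a)$, so both are free---hence flat---over $A^{(r)}$ via $i_r$. Resolving $m_r=m\otimes_A A(r)$ and $A(r)/m_r=(A/m)\otimes_A A(r)$ by the flat base changes $F_\bullet\otimes_A A(r)$ and $G_\bullet\otimes_A A(r)$ of $A$-free resolutions of $m$ and $A/m$, the extension-of-scalars isomorphism $X\otimes_{A(r)}(A(r)\otimes_A F)\cong X|_{i_r}\otimes_A F$ collapses the computation to a single factor: for $X\in\{m_r(s),A(r)/m_r(s)\}$ one finds $\operatorname{Tor}_i^{A(r)}(X,m_r)\cong\operatorname{Tor}_i^{A}(X|_{i_r},m)$ and $\operatorname{Tor}_i^{A(r)}(X,A(r)/m_r)\cong\operatorname{Tor}_i^{A}(X|_{i_r},A/m)$. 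Since $X|_{i_r}$ is flat over $A$, all of these vanish for $i>0$, giving (1)--(4).

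The main obstacle is (5): a priori the iterated tensor product $m_1\otimes_{A(r)}\cdots\otimes_{A(r)}m_s$ could carry torsion and need not embed into $A(r)$, so identifying it with the honest intersection $m_1\cap\cdots\cap m_s$ is the crux. This is precisely what forces the simultaneous induction---one needs the vanishing of $\operatorname{Tor}_1$ at each stage (itself an instance of (1)--(4) one level down, after relabeling factors) to know the tensor product is the product ideal, and then the transversality of the factors, encoded in the vector-space intersection identity, to know the product ideal equals the intersection. Keeping the bookkeeping of which index is special consistent under the symmetry of $A(r)$ is the only delicate point; once the flatness-over-one-factor principle is isolated, everything else is extension of scalars together with the base case Lemma \ref{tor1}.
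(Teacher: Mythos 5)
Your proof is correct, and it takes a genuinely different route from the paper's. The paper argues by induction on $r$, the number of tensor factors of the ring: for $s\leqslant r-1$ all statements are inherited via the flat base change $A(r)\rightarrow A(r+1)$, and the single new case $s=r$ is handled by tensoring a Mayer--Vietoris sequence relating $A(r+1)/(m_{r+1}(r-1)\cap m_r)$, the direct sum $A(r+1)/m_{r+1}(r-1)\oplus A(r+1)/m_r$, and $A(r+1)/(m_{r+1}(r-1),m_r)$ against $m_{r+1}$ and $A(r+1)/m_{r+1}$, with the identification of tensor product, product ideal and intersection then extracted homologically as in Corollary \ref{ideal1}. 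You instead fix $r$ and induct on $s$, making (5) the heart of the matter. Your engine is the observation that $m_1\cap\cdots\cap m_{s-1}$ (and its quotient ring) is extended from the complementary tensor factors, hence \emph{free} over the $t$-th factor $A^{(t)}$ for every $t\notin\{1,\dots,s-1\}$; combined with the extension-of-scalars collapse $\operatorname{Tor}_i^{A(r)}(N,P\otimes_A A(r))\cong\operatorname{Tor}_i^{A}(N,P)$ --- the same device the paper uses inside the proof of Lemma \ref{tor1} --- this yields every needed Tor vanishing in one stroke, and you replace the homological derivation of ``product equals intersection'' by the elementary subspace identity $(V\otimes W_1)\cap(V_1\otimes W)=V_1\otimes W_1$. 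Both arguments rest on the same pillars (flatness of the inclusions $i_j$ and balancedness of Tor), but yours is more self-contained: (1)--(4) become uniform corollaries of (5) plus freeness, the Mayer--Vietoris bookkeeping disappears, and the transversality of the factors (disjoint sets of coordinates) is made explicit, which is precisely what the ``change of coordinates'' remark in Corollary \ref{globalpicture} implicitly appeals to. What the paper's induction on $r$ buys in exchange is brevity, since it recycles Lemma \ref{tor1} and Corollary \ref{ideal1} essentially verbatim, at the price of a terser and less self-explanatory inductive step.
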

\begin{proof}
We prove $(1)-(5)$ by induction on $r$. The case
$r=2$ is settled in Lemma \ref{tor1} and Corollary \ref{ideal1}.
Suppose therefore that the statement holds for $r$, and let us show it holds for $r+1$. If $s\leqslant r-1$, then $(1)-(4)$ follow immediately from the fact that $A(r+1)$ is flat over $A(r)$. Similarly, $(5)$ follows if $s\leqslant r$. Assume now that $s=r$ and consider the following short exact sequence.

\[
0\rightarrow \frac{A(r+1)}{m_{r+1}(r-1) \cap m_r}
\rightarrow \frac{A(r+1)}{m_{r+1}(r-1)}\oplus \frac{A(r+1)}{m_r}
\rightarrow \frac{A(r+1)}{(m_{r+1}(r-1), m_r)}\rightarrow 0 
\]

First notice that we have a natural isomorphism:
\[
\frac{A(r+1)}{(m_{r+1}(r-1), m_r)} \cong \frac{A(r)}{m_r(r-1)}
\]

Furthermore, by $(5)$ we have that
\[
\frac{A(r+1)}{m_{r+1}(r-1) \cap m_r} \cong \frac{A(r+1)}{m_{r+1}(r)}
\]

Therefore, $(1)$ and $(3)$ follow by taking tensor products in the above short exact sequence with $m_{r+1}$ 
and $A(r+1)/m_{r+1}$ respectively, and by using the inductive hypothesis. 

The rest follow as in the proofs
of Lemma \ref{tor1} and Corollary \ref{ideal1}.
\end{proof}

\begin{corollary}\label{resolution}
Let $C_{\bullet}$ be a free resolution of $m=C_0$ as an $A$-module.  
For any $1\leqslant j\leqslant r$, let $C_{\bullet}^j$ be $C_{\bullet}\otimes_A A(r)$ where the module structure is given by the map $i_j:A\rightarrow A(r)$.
Let $C(s)=C^1 \otimes_{A(r)} \cdots \otimes_{A(r)} C^s$, and let
$TC(s)$ be the associated total complex. Then $C(s)$ and
$TC(s)$ are exact.
\end{corollary}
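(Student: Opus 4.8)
The plan is to prove the statement by induction on $s$ (with $r$ fixed), reducing the exactness of the total complex $TC(s)$ to the Tor-vanishing already established in Lemma \ref{localpicture}. The base case $s=1$ is immediate: since $i_1:A\rightarrow A(r)$ is flat, tensoring the resolution $C_{\bullet}$ yields a free resolution $C^1_{\bullet}=C_{\bullet}\otimes_A A(r)$ of $m\otimes_A A(r)\cong m_1$ over $A(r)$, exactly as in the proof of Lemma \ref{tor1}. In general, each $C^j_{\bullet}$ is for the same reason a free resolution of $m_j$ over $A(r)$, and since tensor products of free $A(r)$-modules are free, every term of the multicomplex $C(s)$ and of its total complex $TC(s)$ is a free $A(r)$-module.

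For the inductive step I would realize $TC(s)$ as the total complex of $TC(s-1)\otimes_{A(r)} C^s$, using the standard fact that totalizing an $s$-fold complex may be carried out one direction at a time. By the inductive hypothesis $TC(s-1)$ is a free resolution of $m_r(s-1)\cong m_1\cap\cdots\cap m_{s-1}$ (Lemma \ref{localpicture}(5)); tensoring this free resolution with the free resolution $C^s_{\bullet}$ of $m_s$ and totalizing therefore computes $\operatorname{Tor}^{A(r)}_i(m_r(s-1),m_s)$. In positive degrees these Tor groups vanish: after relabeling the tensor factors so that $m_s$ plays the role of the distinguished last ideal, this is precisely Lemma \ref{localpicture}(4), which applies because $s-1\leqslant r-1$. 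In degree zero one obtains $m_r(s-1)\otimes_{A(r)} m_s = m_r(s)\cong m_1\cap\cdots\cap m_s$ by associativity of the tensor product together with Lemma \ref{localpicture}(5). Hence $TC(s)$ is acyclic in positive degrees with $H_0\cong m_1\cap\cdots\cap m_s$, which is the asserted exactness.

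The exactness of the multicomplex $C(s)$ itself follows from the same flatness input: freezing all but one of the $s$ homological directions leaves a complex of the form $F\otimes_{A(r)} C^j_{\bullet}$ with $F$ a free, hence flat, $A(r)$-module, and this is exact in positive degrees because $C^j_{\bullet}$ resolves $m_j$. Thus $C(s)$ is exact in each direction, and passing to the total complex gives the exactness of $TC(s)$ in the sense above.

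The main obstacle I anticipate is bookkeeping rather than conceptual. One must check carefully that the iterated tensor-product homology is genuinely identified with the Tor modules of Lemma \ref{localpicture}, that is, that the hyper-Tor balancing computation applies because all complexes in sight consist of flat modules; and one must invoke the symmetry of the construction under permuting the $r$ tensor factors to bring the general group $\operatorname{Tor}^{A(r)}_i(m_1\cap\cdots\cap m_{s-1},m_s)$ into the exact indexed form in which Lemma \ref{localpicture}(4) is phrased. Both points are routine once the Tor-vanishing is in hand, so the essential work has already been done in the preceding lemma.
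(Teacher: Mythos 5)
Your argument hinges on the claim that every term of $C(s)$ and $TC(s)$ is a free $A(r)$-module, and consequently that $TC(s-1)$ is a free resolution of $m_r(s-1)$. That claim is false for the complexes of this corollary: the statement sets $C_0=m$, so each $C^j$ is the \emph{augmented} complex, with the ideal $m_j$ itself (not a free module) sitting in degree zero. This reading is forced by how the corollary is used: in Lemma \ref{exact} the complexes $C(j_1,j_2,n,r)$ explicitly carry the ideal sheaf in degree zero, and genuine exactness of their tensor products is what is asserted and needed there. (Under your non-augmented reading, ``$TC(s)$ is exact'' would even be false, since $H_0=m_r(s)\neq 0$; you implicitly concede this when you reinterpret exactness as ``acyclic in positive degrees with $H_0\cong m_1\cap\cdots\cap m_s$''.) With the correct reading, every term of $C(s)$ having at least one degree-zero tensor factor fails to be free or flat, so the balancing-of-Tor argument does not apply as you state it, and your induction has a real gap: filtering the first-quadrant double complex $TC(s-1)_\bullet\otimes C^s_\bullet$ so that the free terms $C^s_j$, $j\geqslant 1$, contribute exact rows, the spectral sequence gives $H_k(TC(s))\cong H_k(TC(s-1)\otimes m_s)$; since $TC(s-1)$ is not a complex of flat modules, exactness of $TC(s-1)$ (your inductive hypothesis) neither makes this vanish nor identifies it with $\operatorname{Tor}_k^{A(r)}(m_r(s-1),m_s)$. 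Your inductive hypothesis is simply too weak to close the step.

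The repair, which is presumably the argument the paper leaves implicit behind ``immediate from Lemma \ref{localpicture}'', uses the Tor vanishing precisely in the form of part (4) of that lemma. Key observation: since $C^j_{\geqslant 1}$ is a free resolution of $m_j$, for any module $N$ the complex $C^j\otimes N$ is exact if and only if $\operatorname{Tor}_i^{A(r)}(m_j,N)=0$ for all $i>0$. Now fix a direction, say the last one: each line of $C(s)$ in that direction has the form $F\otimes\bigl(\bigotimes_{t\in T}m_t\bigr)\otimes C^s_\bullet$ with $F$ free and $T\subseteq\{1,\dots,s-1\}$, and Lemma \ref{localpicture}(4), applied after permuting the factors of $A(r)$ so that $T$ becomes $\{1,\dots,|T|\}$ and the index $s$ becomes $r$, gives exactly the required vanishing $\operatorname{Tor}_i^{A(r)}(m_s,\bigotimes_{t\in T}m_t)=0$. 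Hence every row of the double complex $TC(s-1)_\bullet\otimes C^s_\bullet$ is exact (each row is a direct sum of such lines), and the acyclic assembly lemma yields exactness of $TC(s)$; running the same argument in each direction gives exactness of $C(s)$, with no induction on $s$ needed at all. Alternatively, you can keep your staged induction but must strengthen the inductive statement to ``$TC(s')\otimes\bigotimes_{t\in T}m_t$ is exact for every index set $T$ disjoint from $\{1,\dots,s'\}$''. Your proposal correctly identifies Lemma \ref{localpicture}(4) as the essential input, but as organized the induction never brings it to bear on the non-flat degree-zero terms, which is exactly where it is needed.
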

\begin{proof}
Immediate from Lemma \ref{localpicture}.
\end{proof}

Our next goal is to introduce some notation and
globalize Lemma \ref{localpicture}. Let $2\leqslant s\leqslant r$ and $1\leqslant i\neq j\leqslant r$ be positive integers. Let $X$ be a projective variety of dimension $n$.
We denote by 
\[
X(r)=X\times \cdots \times X
\]

the product of $r$ copies of $X$. We denote by 
\[
\operatorname{pr}_i : X(r)\rightarrow X
\]

and by

\[
\operatorname{pr}_{i,j}=\operatorname{pr}_i \times \operatorname{pr}_j : X(r)\rightarrow X\times X
\]

the corresponding projections.
We denote by
\[
\Delta_{X(r)} ^{i,j}=\operatorname{pr}_{i,j}^* \Delta_X
\]

the diagonal relative to the entries $i$ and $j$. We set
\[
\Delta_{X(r)}^1 (s) = \bigcup_{2\leqslant i\leqslant s} \Delta_{X(r)} ^{1,i}
\]

and

\[
\Delta_{X(r)}^2 (s) = \bigcup_{2\leqslant i\leqslant s} \Delta_{X(r)} ^{i-1,i}
\]

Similarly, we set
\[
\mathcal{I}_{\Delta_{X(r)}} ^1 (s)= \mathcal{I}_{\Delta_{X(r)} ^{1, 2}}\otimes 
\mathcal{I}_{\Delta_{X(r)} ^{1, 3}}\otimes \cdots \otimes \mathcal{I}_{\Delta_{X(r)} ^{1, s}}
\]
and

\[
\mathcal{I}_{\Delta_{X(r)}} ^2 (s)= \mathcal{I}_{\Delta_{X(r)} ^{1, 2}} \otimes 
\mathcal{I}_{\Delta_{X(r)} ^{2, 3}} \otimes \cdots \otimes \mathcal{I}_{\Delta_{X(r)} ^{s-1,s}}
\]

Of course,
\[
\mathcal{I}_{\Delta_{X(r)}}^1 (2) = \mathcal{I}_{\Delta_{X(r)}}^2 (2) =
\mathcal{I}_{\Delta_{X(r)}^{1,2}}
\]

\begin{corollary}\label{globalpicture}
Assume that $r\geqslant 3$, $s<t\leqslant r$ and $i>0$. Then:
\begin{enumerate}
    \item $\sheaftor _i ^{\mathcal{O}_{X(r)}}   (\mathcal{O}_{X(r)}/\mathcal{I}_{\Delta_{X(r)}} ^1(s),
    \mathcal{I}_{\Delta_{X(r)} ^{1,t}})=0$.
    \item $\sheaftor _i ^{\mathcal{O}_{X(r)}}
    (\mathcal{I}_{\Delta_{X(r)}} ^1(s), \mathcal{O}_{X(r)}
    /\mathcal{I}_{\Delta_{X(r)} ^{1,t}})=0$.
    \item $\sheaftor _i ^{\mathcal{O}_{X(r)}}
    (\mathcal{O}_{X(r)}/\mathcal{I}_{\Delta_{X(r)}} ^1(s), \mathcal{O}_{X(r)}
    /\mathcal{I}_{\Delta_{X(r)} ^{1,t}})=0$.
    \item $\sheaftor _i ^{\mathcal{O}_{X(r)}}
    (\mathcal{I}_{\Delta_{X(r)}}^1(s), 
    \mathcal{I}_{\Delta_{X(r)} ^{1,t}})=0$.
    \item $\sheaftor _i ^{\mathcal{O}_{X(r)}} (\mathcal{O}_{X(r)}/\mathcal{I}_{\Delta_{X(r)}} ^2(s),
    \mathcal{I}_{\Delta_{X(r)} ^{s,s+1}})=0$.
    \item $\sheaftor _i ^{\mathcal{O}_{X(r)}}
    (\mathcal{I}_{\Delta_{X(r)}} ^2(s), \mathcal{O}_{X(r)}
    /\mathcal{I}_{\Delta_{X(r)} ^{s,s+1}})=0$.
    \item $\sheaftor _i ^{\mathcal{O}_{X(r)}}
    (\mathcal{O}_{X(r)}/\mathcal{I}_{\Delta_{X(r)}} ^2(s), \mathcal{O}_{X(r)}
    /\mathcal{I}_{\Delta_{X(r)} ^{s,s+1}})=0$.
    \item $\sheaftor _i ^{\mathcal{O}_{X(r)}}
    (\mathcal{I}_{\Delta_{X(r)}}^2(s), 
    \mathcal{I}_{\Delta_{X(r)} ^{s,s+1}})=0$.
\end{enumerate}

Furthermore, we have:

\[
\mathcal{I}_{\Delta_{X(r)}} ^1(s)\cong\mathcal{I}_{\Delta_{X(r)} ^{1, 2}}\cdots \mathcal{I}_{\Delta_{X(r)} ^{1, s}}=
\mathcal{I}_{\Delta_{X(r)} ^{1, s}}\cap \cdots\cap \mathcal{I}_{\Delta_{X(r)} ^{1, s}}=\mathcal{I}_{\Delta_{X(r)}^1 (s)}
\]

and
\[
\mathcal{I}_{\Delta_{X(r)}} ^2(s)\cong\mathcal{I}_{\Delta_{X(r)} ^{1, 2}}\cdots \mathcal{I}_{\Delta_{X(r)} ^{s-1, s}}=
\mathcal{I}_{\Delta_{X(r)} ^{1, 2}}\cap \cdots\cap \mathcal{I}_{\Delta_{X(r)} ^{s-1, s}}=\mathcal{I}_{\Delta_{X(r)}^2 (s)}
\]
\end{corollary}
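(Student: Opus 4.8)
The plan is to reduce every assertion to the purely local statements of Lemma \ref{localpicture}. Since a $\sheaftor$ sheaf is computed on stalks, its stalk at a point $P=(p_1,\dots,p_r)\in X(r)$ being $\operatorname{Tor}_i^{\mathcal{O}_{X(r),P}}((\,\cdot\,)_P,(\,\cdot\,)_P)$, and since tensor products and the formation of ideals commute with localization, it suffices to verify $(1)$–$(8)$ together with the two displayed isomorphisms on the stalk at an arbitrary $P$.

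First I would discard the diagonals not passing through $P$. The stalk $(\mathcal{I}_{\Delta_{X(r)}^{i,j}})_P$ equals the whole ring $\mathcal{O}_{X(r),P}$ whenever $p_i\neq p_j$, and equals the local diagonal ideal when $p_i=p_j$; in the former case the corresponding tensor factor is trivial and drops out. In particular $(1)$–$(4)$ hold at once when $p_1\neq p_t$, and $(5)$–$(8)$ hold at once when $p_s\neq p_{s+1}$. After discarding the trivial factors, the surviving diagonals group the indices into blocks of factors sharing a common value, and $\mathcal{I}_{\Delta_{X(r)}}^1(s)_P$ (resp. $\mathcal{I}_{\Delta_{X(r)}}^2(s)_P$) becomes the tensor product of the surviving local diagonal ideals.

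The key step is a local change of coordinates matching these surviving ideals with the coordinate ideals $m_j$ of Subsection \ref{algebra}. Because $X$ is smooth, after passing to the completion (faithfully flat, hence harmless for $\operatorname{Tor}$) each factor through a point $x$ is $\mathbb{C}[[t_1,\dots,t_n]]$ and a diagonal ideal is generated by the differences of two coordinate systems. For the type $1$ family I would use the substitution $u^{(i)}=t^{(i)}-t^{(1)}$, $i\geqslant 2$, which simultaneously carries each $\mathcal{I}_{\Delta_{X(r)}^{1,i}}$ to the coordinate ideal of the $i$-th factor; for the chained type $2$ family I would use the telescoping substitution $v^{(j)}=t^{(j)}-t^{(j-1)}$, carrying each $\mathcal{I}_{\Delta_{X(r)}^{j-1,j}}$ to the coordinate ideal of the $j$-th factor. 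In either case the surviving ideals become \emph{distinct} coordinate ideals, so that, up to relabeling and an external tensor product with the untouched factors, we are exactly in the situation of Lemma \ref{localpicture}; the hypothesis $s<t$ (type $1$) and the fact that $\Delta_{X(r)}^{s,s+1}$ merely extends the chain (type $2$) guarantee that the paired ideal becomes a coordinate ideal not occurring in $\mathcal{I}_{\Delta_{X(r)}}^1(s)$ resp. $\mathcal{I}_{\Delta_{X(r)}}^2(s)$, which is precisely the condition $s\leqslant r-1$ needed to invoke parts $(1)$–$(4)$ of that lemma. This yields $(1)$–$(8)$. The displayed isomorphisms then follow from Lemma \ref{localpicture}$(5)$ and Corollary \ref{ideal1}, together with the fact that each diagonal is reduced, so that the intersection of the radical ideals $\mathcal{I}_{\Delta_{X(r)}^{1,i}}$ is the ideal of the reduced union $\Delta_{X(r)}^1(s)$.

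The step I expect to be most delicate is the bookkeeping of the coincidence patterns and the verification that the two substitutions diagonalize their whole families simultaneously. For the type $2$ configuration one must check that consecutive diagonals sharing a factor, such as $\Delta_{X(r)}^{j-1,j}$ and $\Delta_{X(r)}^{j,j+1}$, nonetheless go to distinct coordinate ideals, and that when $P$ produces several disjoint blocks the local problem factors as an external tensor product of independent models to which Lemma \ref{localpicture} applies block by block. A minor point to handle with care is that the completion of $\mathcal{O}_{X(r),P}$ is a completed tensor product rather than the algebraic $A(r)$ of the lemma; but since $X$ is smooth the maximal ideal is resolved by a Koszul complex and the relevant vanishing is insensitive to this distinction.
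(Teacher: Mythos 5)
Your proposal is correct and takes essentially the same approach as the paper: the paper's entire proof is ``Immediate from Lemma \ref{localpicture} after a change of coordinates,'' and your stalk-wise reduction with the straightening substitutions $u^{(i)}=t^{(i)}-t^{(1)}$ (type $1$) and $v^{(j)}=t^{(j)}-t^{(j-1)}$ (type $2$) is precisely that intended change of coordinates, carried out in detail. The one point the paper leaves implicit---that the completed local ring is a completed rather than algebraic tensor product---is handled correctly in your proposal, since completion is faithfully flat and the straightened ideals are generated by disjoint regular sequences, so the argument of Lemma \ref{localpicture} goes through verbatim.
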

\begin{proof}
Immediate from Lemma \ref{localpicture} after a change of coordinates. 
\end{proof}

\subsection{Koszul cohomology and property $N_p$.} We briefly recall here some
of the basic definitions and properties of Koszul cohomology groups. We refer to Green's original papers \cite{green1} and \cite{green2} and to Lazarsfeld's expository notes \cite{sampling} for a more detailed treatement.
Let $V$ be a finite dimensional vector space, let $S(V)$ be the
symmetric algebra on $V$ and let $B=\oplus_{q\in \mathbb{Z}} B_q$ be a graded $S(V)$-module. Then there is a natural Koszul complex
\[
    \cdots \rightarrow \wedge^{p+1} V \otimes B_{q-1}\xrightarrow{d_{p+1,q-1}} \wedge^p V\otimes B_q
    \xrightarrow{d_{p,q}} \wedge ^{p-1} V\otimes B_{q+1}
    \rightarrow\cdots
\]

We define the Koszul cohomology groups of $B$ as
\[
\mathcal{K}_{p,q}(B,V)=\frac{\operatorname{Ker}(d_{p,q})}{\operatorname{Im}(d_{p+1,q-1})}
\]

Now let $X$ be a smooth projective variety, $L$ a line bundle, $V\subseteq H^0(X,L)$ a vector space and $\mathcal{F}$ a sheaf.
Then we set
\[
\mathcal{K}_{p,q}(X, \mathcal{F}, L, V)=\mathcal{K}_{p,q}(B, V)
\]

where $B=\oplus_{q\in \mathbb{Z}} H^0(X, \mathcal{F}\otimes L^{\otimes q})$. It is common to simply write
$\mathcal{K}_{p,q}(X, \mathcal{F}, L)$ if $V=H^0(X,L)$, and
to write $\mathcal{K}_{p,q}(X, L)$ if furthermore $\mathcal{F}$ is the
structure sheaf. Notice that 
Definition \ref{defnp} may be rephrased as
\[
\operatorname{Tor}_i ^{S(V)}(R, \mathbb{C})_d = 0
\]

for any $0\leqslant i\leqslant p$ and $d\geqslant i+2$. By using the standard Koszul
resolution of $\mathbb{C}$ as an $S(V)$-module and the fact that $\operatorname{Tor}_i ^{S(V)}$ is a balanced functor, we see
that a globally generated line bundle $L$
satisfies property $N_p$ if and only if
\[
\mathcal{K}_{r,q}(X,L)=0
\]

for all $0\leqslant r\leqslant p$ and $q\geqslant 2$. 
With this in mind, in the sequel we will show property $N_p$ via the following theorem.

\begin{theorem}\label{greendiagonal}
Let $X$ be a smooth projective variety, let $L$ be a line bundle and 
let $E$ be a vector bundle. Then 
\[
\mathcal{K}_{p,q}(X,E,L)=0
\]

for all $q\geqslant 2$ provided that
\[
H^1(X(r), \mathcal{I}_{\Delta_{X(r)}} ^1(r)\otimes (E\otimes L^{\otimes k}) \boxtimes L\boxtimes\cdots \boxtimes L)=0
\]

for all $2\leqslant r\leqslant p+2$ and $k\geqslant 1$.
\end{theorem}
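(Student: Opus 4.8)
The plan is to realize the Koszul complex computing $\mathcal{K}_{\bullet,\bullet}(X,E,L)$ as the alternating part of the bottom row of a hypercohomology spectral sequence on the product $X(r)$ with $r=p+2$, and then to extract $\mathcal{K}_{p,q}$ directly from the hypothesis $H^1\big(X(r),\mathcal{I}_{\Delta_{X(r)}}^1(r)\otimes(E\otimes L^{\otimes(q-1)})\boxtimes L\boxtimes\cdots\boxtimes L\big)=0$. Throughout set $V=H^0(X,L)$ and $B_q=H^0(X,E\otimes L^{\otimes q})$, and fix $r=p+2$ together with the vector bundle $\mathcal{N}=(E\otimes L^{\otimes(q-1)})\boxtimes L\boxtimes\cdots\boxtimes L$ on $X(r)$ (with $p+1$ copies of $L$).

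First I would build the Mayer--Vietoris (\v{C}ech-type) resolution of $\mathcal{O}_{\Delta_{X(r)}^1(r)}$, the structure sheaf of the union $\Delta_{X(r)}^1(r)=\bigcup_{i=2}^r\Delta_{X(r)}^{1,i}$, by the sheaves $\mathcal{O}_{\Delta^{1,I}}$ where $\Delta^{1,I}=\bigcap_{i\in I}\Delta_{X(r)}^{1,i}$ for $I\subseteq\{2,\dots,r\}$. Its exactness is precisely what the Tor-computations of Section \ref{algebra}, globalized in Corollary \ref{globalpicture}, provide: they guarantee that the iterated scheme-theoretic intersections of the partial diagonals are Tor-independent and that the relevant products of ideals coincide with their intersections. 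Splicing this resolution with $0\to\mathcal{I}_{\Delta_{X(r)}}^1(r)\to\mathcal{O}_{X(r)}\to\mathcal{O}_{\Delta_{X(r)}^1(r)}\to 0$ and twisting by the locally free sheaf $\mathcal{N}$ produces a bounded complex $\mathcal{D}^\bullet$ with $\mathcal{D}^0=\mathcal{N}$ and $\mathcal{D}^s=\bigoplus_{|I|=s}\mathcal{O}_{\Delta^{1,I}}\otimes\mathcal{N}$ for $s\geq 1$, whose only cohomology sheaf is $\mathcal{H}^0(\mathcal{D}^\bullet)=\mathcal{I}_{\Delta_{X(r)}}^1(r)\otimes\mathcal{N}$. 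Consequently $\mathbb{H}^k(\mathcal{D}^\bullet)=H^k\big(X(r),\mathcal{I}_{\Delta_{X(r)}}^1(r)\otimes\mathcal{N}\big)$.

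Next I would identify the terms. Since $\Delta^{1,I}\cong X(r-|I|)$ and restriction to it multiplies the $|I|$ chosen $L$-factors into the first factor, one gets $\mathcal{O}_{\Delta^{1,I}}\otimes\mathcal{N}\cong(E\otimes L^{\otimes(q-1+s)})\boxtimes L\boxtimes\cdots\boxtimes L$, whence by K\"unneth $H^0(\mathcal{D}^s)=\bigoplus_{|I|=s}B_{q-1+s}\otimes V^{\otimes(p+1-s)}$. The symmetric group $S_{p+1}$ permuting the last $p+1$ factors acts on all of $\mathcal{D}^\bullet$, and passing to the sign-isotypic (alternating) part—an exact operation—identifies the row $t=0$ of the first hypercohomology spectral sequence $E_1^{s,t}=H^t(\mathcal{D}^s)$ with the Koszul complex $\cdots\to\wedge^{p+1-s}V\otimes B_{q-1+s}\to\cdots$, its differentials $d_1$ being exactly the Koszul differentials (restriction to a diagonal is multiplication, assembled with \v{C}ech signs). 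In particular the alternating part of $E_2^{s,0}$ is $\mathcal{K}_{p+1-s,\,q-1+s}(X,E,L)$, so the alternating part of $E_2^{1,0}$ equals $\mathcal{K}_{p,q}(X,E,L)$.

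Finally I would run the spectral sequence. Because $\mathcal{D}^\bullet$ is concentrated in nonnegative degrees and no term carries cohomology in negative degree, the spot $(1,0)$ receives no incoming and emits no outgoing higher differential, so the alternating part of $E_\infty^{1,0}$ equals that of $E_2^{1,0}$, namely $\mathcal{K}_{p,q}(X,E,L)$. On the other hand the hypothesis with $r=p+2$ and $k=q-1\geq 1$ gives $\mathbb{H}^1(\mathcal{D}^\bullet)=0$, hence its degree-one graded pieces, in particular $E_\infty^{1,0}$, vanish; therefore $\mathcal{K}_{p,q}(X,E,L)=0$ for every $q\geq 2$. I expect the main obstacle to be the two structural inputs underlying this argument: verifying that $\mathcal{D}^\bullet$ really is a resolution—that is, that the Mayer--Vietoris complex of the partial diagonals is exact—and carrying out the combinatorial sign bookkeeping that identifies its alternating bottom row with the genuine Koszul complex and differential. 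Both reduce to Corollary \ref{globalpicture}, but the sign/combinatorics of the \v{C}ech-to-Koszul comparison is the delicate point. (In the form proposed here only the case $r=p+2$ is used for the stated conclusion; the hypotheses for $2\leq r<p+2$ are what the theorem needs when applied with $p$ replaced by smaller values, and so yield the full range of vanishings required for property $N_p$.)
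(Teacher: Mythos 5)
Your proposal is correct and is essentially the paper's own route: the proof of Theorem \ref{greendiagonal} in the paper is by citation to Green's argument for \cite[Theorem 3.2]{green2}, which is precisely your construction --- the \v{C}ech/Mayer--Vietoris resolution of the union of star diagonals $\Delta_{X(r)}^{1,i}$, the hypercohomology spectral sequence, and extraction of the Koszul complex as the sign-isotypic part --- carried out with the corrected ideals $\mathcal{I}_{\Delta_{X(r)}}^1(r)$ and with Corollary \ref{globalpicture} supplying the scheme-theoretic inputs. The two points you flag are exactly where care is needed, and both are settled in the local model of Lemma \ref{localpicture} rather than by the literal statement of Corollary \ref{globalpicture}: the Mayer--Vietoris complex is the tensor product of the two-term complexes $[\mathcal{O}_{X(r)}\rightarrow\mathcal{O}_{\Delta_{X(r)}^{1,j}}]$, so its exactness is a K\"unneth computation in that model (equivalently one needs the distributivity of sums and intersections of the diagonal ideals, which Corollary \ref{globalpicture} does not state), and the $S_{p+1}$-action must be twisted by the sign of the induced permutation on each index set $I$, since with the untwisted action the differentials are not equivariant and the alternating part of every \v{C}ech degree $\geqslant 2$ term would vanish, destroying the identification of $E_2^{1,0}$ with $\mathcal{K}_{p,q}(X,E,L)$.
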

\begin{proof}
Follow the proof of \cite[Theorem 3.2]{green2} and use
Corollary \ref{globalpicture} when needed. See also \cite{nori}.
\end{proof}

\begin{remark}
The proof given by Green in \cite{green2} contains a subtle mistake, in that the diagonal sheaves $\mathcal{I}_{\Delta_X} ^2(r)$ are used, rather than the correct $\mathcal{I}_{\Delta_X} ^1(r)$. Unfortunately, this mistake seems to have gone unnoticed in several papers; for example it appears in \cite{inamdar} as well. 
\end{remark}

\subsection{Koszul rings.}\label{koszulrings}
Let $R$ be a positively graded $\mathbb{C}$-algebra.
Let
\[
\cdots \rightarrow P_{n+1}\xrightarrow{\phi_{n+1}}
P_n \xrightarrow{\phi_n}\cdots\rightarrow P_1
\xrightarrow{\phi_1} P_0\rightarrow \mathbb{C}\rightarrow 0
\]

be a minimal graded free resolution of $\mathbb{C}$. Notice that this resolution is finite if and only if $R$ is a polynomial ring. The ring $R$ is called Koszul if
all the entries in the matrix representing $\phi_n$ have degree one. This is equivalent to requiring that
$\operatorname{Tor}_i ^R(\mathbb{C}, \mathbb{C})_d=0$
for all $i>0$ and $d\neq i$.
We will use the following criterion.

\begin{theorem}\label{mehtadiagonal}
Let $X$ be a projective variety and let $L$ be a line bundle.
The graded ring of sections $R(X,L)$ is Koszul if
\[
H^1(X(r), \mathcal{I}_{\Delta_{X(r)}}^2(r)\otimes L\boxtimes L\boxtimes \cdots \boxtimes L^{\otimes k})=0
\]

and

\[
H^1(X(r), \mathcal{I}_{\Delta_{X(r)} ^{r-1, r}}\otimes L\boxtimes L\boxtimes \cdots \boxtimes L^{\otimes k})=0
\]

for all $r\geqslant 2$ and $k\geqslant 1$.
\end{theorem}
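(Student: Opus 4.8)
The plan is to mirror the strategy behind Theorem \ref{greendiagonal}, replacing the ``star'' of diagonals through the first coordinate by the ``path'' of consecutive diagonals that governs the bar complex. First I would translate Koszulness into the vanishing
\[
\operatorname{Tor}_r^R(\mathbb{C},\mathbb{C})_d = 0 \quad \text{for all } r\geqslant 1 \text{ and } d>r,
\]
using that, as recorded in Subsection \ref{koszulrings}, $R$ is Koszul precisely when the minimal resolution of $\mathbb{C}$ is linear; the bound $d\geqslant r$ is automatic since $R$ is positively graded, so only $d>r$ must be checked. I would compute these Tor groups with the reduced bar complex, whose term in homological degree $r$ is $\overline{R}^{\otimes r}$ and whose differential collapses consecutive tensor factors via the multiplication maps $R_a\otimes R_b\to R_{a+b}$. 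By Künneth, the multidegree-$(a_1,\dots,a_r)$ summand of $\overline{R}^{\otimes r}$ is exactly $H^0(X(r), L^{\otimes a_1}\boxtimes\cdots\boxtimes L^{\otimes a_r})$, and collapsing the factors $j,j+1$ is the restriction of sections along $\mathcal{O}_{X(r)}\to\mathcal{O}_{\Delta_{X(r)}^{j,j+1}}$. Thus the whole bar differential is induced by the consecutive diagonals $\Delta_{X(r)}^{j,j+1}$, that is, by the path divisor $\Delta_{X(r)}^2(r)$.

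Next I would build, on each $X(r)$, an exact complex of sheaves resolving $\mathcal{I}_{\Delta_{X(r)}^2(r)}$ out of the structure sheaves of the unions of consecutive diagonals, so that applying $H^0\bigl(X(r),-\otimes L\boxtimes\cdots\boxtimes L^{\otimes k}\bigr)$ recovers the relevant strand of the bar complex in internal degree $d=r-1+k$, the extra positivity being absorbed into the last box factor $L^{\otimes k}$, exactly as the first factor $E\otimes L^{\otimes k}$ works in Theorem \ref{greendiagonal}. The exactness of this sheaf complex is where Corollary \ref{globalpicture}(5)--(8) is essential: the identities $\mathcal{I}_{\Delta_{X(r)}}^2(s)\cong\mathcal{I}_{\Delta_{X(r)}^2(s)}$ together with the vanishing of the relevant $\sheaftor$ sheaves guarantee that the consecutive diagonals meet in a Tor-independent way, so that the decomposition
\[
\Delta_{X(r)}^2(r) = \Delta_{X(r)}^2(r-1)\cup \Delta_{X(r)}^{r-1,r}
\]
behaves like the union along a regular sequence and the candidate resolution is genuinely exact rather than merely a complex.

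Finally, I would pass from exactness of sheaves to exactness of global sections, where the two cohomological hypotheses do the work: the vanishing of $H^1$ of $\mathcal{I}_{\Delta_{X(r)}}^2(r)\otimes L\boxtimes\cdots\boxtimes L^{\otimes k}$ controls the accumulated path $\Delta_{X(r)}^2(r-1)$, while the vanishing of $H^1$ of $\mathcal{I}_{\Delta_{X(r)}^{r-1,r}}\otimes L\boxtimes\cdots\boxtimes L^{\otimes k}$ controls the newly added last edge $\Delta_{X(r)}^{r-1,r}$ appearing in the Mayer--Vietoris step. Together they kill the $H^1$ obstructions that would otherwise prevent $H^0$ from preserving exactness at the critical spot, yielding $\operatorname{Tor}_r^R(\mathbb{C},\mathbb{C})_d=0$ for $d>r$, and hence Koszulness. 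I expect the main obstacle to be the homological bookkeeping in the second step: pinning down precisely which complex of consecutive-diagonal sheaves has the bar complex as its global sections, and matching its two families of boundary terms to the two hypotheses so that the inductive argument closes. Once that complex is correctly identified, its exactness is formal from Corollary \ref{globalpicture} and the conclusion is a diagram chase in the long exact cohomology sequence, in direct analogy with the proof of Theorem \ref{greendiagonal} and with Pareschi's argument in \cite{pareschikoszul}.
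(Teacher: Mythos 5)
First, a point of reference: the paper does not prove this statement at all --- its ``proof'' is a one-line citation of Proposition 1.9 of \cite{mehtainamdar} --- so your proposal has to be measured against that argument. Your first step is correct and matches it in spirit: Koszulness is equivalent to $\operatorname{Tor}_r^R(\mathbb{C},\mathbb{C})_d=0$ for $d>r$, these groups are computed by the reduced bar complex, and via K\"unneth its terms are spaces of sections on the products $X(r)$ with differentials induced by restriction to the consecutive diagonals $\Delta_{X(r)}^{j,j+1}$. The gap lies in the step you yourself flag as the ``main obstacle,'' and it is not bookkeeping but the crux of the theorem: the internal-degree-$d$ strand of the bar complex (say $d=r-1+k$) is a sum over \emph{all} compositions $(a_1,\dots,a_r)$ of $d$ with $a_i\geqslant 1$, whereas a sheaf complex on $X(r)$ twisted by the single line bundle $L\boxtimes\cdots\boxtimes L\boxtimes L^{\otimes k}$ can only produce, after applying $H^0$, the bar terms obtained from the multidegree $(1,\dots,1,k)$ by collapsing consecutive entries --- and every such term has last entry $\geqslant k$. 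Summands such as $R_k\otimes R_1\otimes\cdots\otimes R_1$ in the same internal degree are invisible to your complex, and neither hypothesis (both place the high power in the last factor only) controls them. So exactness of your candidate complex cannot compute $\operatorname{Tor}_r^R(\mathbb{C},\mathbb{C})_{r+k}$, and the plan as literally stated does not close.

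The missing idea --- and the way the cited proof in essence proceeds --- is to abandon the bar complex entirely and build a linear free resolution of $\mathbb{C}$ directly out of the section spaces
\[
M_r(k)=H^0\bigl(X(r),\,\mathcal{I}_{\Delta_{X(r)}}^2(r)\otimes L\boxtimes\cdots\boxtimes L\boxtimes L^{\otimes k}\bigr),
\qquad M_1(k)=R_k .
\]
Restricting to the last consecutive diagonal $\Delta_{X(r+1)}^{r,r+1}\cong X(r)$, the Tor-independence and product-equals-intersection statements of Corollary \ref{globalpicture} identify the kernel and cokernel terms, and the $H^1$ hypotheses give surjectivity on the right, producing short exact sequences
\[
0\to M_{r+1}(k)\to M_r(1)\otimes R_k\to M_r(k+1)\to 0
\qquad (r\geqslant 1,\ k\geqslant 1).
\]
These say exactly that the graded module $\bigoplus_k M_r(k)$, with $M_r(k)$ placed in degree $k+r-1$, is generated in its lowest degree $r$ with relation module $\bigoplus_k M_{r+1}(k)$. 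Splicing them yields a free resolution
\[
\cdots\to M_2(1)\otimes R(-2)\to R_1\otimes R(-1)\to R\to\mathbb{C}\to 0
\]
whose $r$-th term is $M_r(1)\otimes R(-r)$; tensoring with $\mathbb{C}$ shows $\operatorname{Tor}_r^R(\mathbb{C},\mathbb{C})$ is concentrated in degree $r$, i.e.\ $R$ is Koszul. Your instinct about how the two vanishings divide the work (accumulated path versus newly added edge) is sound, but without this direct construction of the resolution --- or some equivalent reduction of arbitrary multidegrees to $(1,\dots,1,k)$ --- the argument has a hole precisely where the content of the theorem lies.
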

\begin{proof}
This is Proposition 1.9 in \cite{mehtainamdar}.
\end{proof}

\subsection{Resolutions of the diagonal.}
Let $M=\Omega_{\mathbb{P}^n}(1)$.
Beilinson's resolution of the diagonal $\Delta_{\mathbb{P}^n}$ of $\mathbb{P}^n\times \mathbb{P}^n$ is the exact sequence:
\begin{equation*}\tag{1}
0\rightarrow \wedge^n (\mathcal{O}_{\mathbb{P}^n}(-1)\boxtimes M) \rightarrow
\cdots 
\rightarrow \mathcal{O}_{\mathbb{P}^n}(-1)\boxtimes M\rightarrow \mathcal{I}_{\Delta_{\mathbb{P}^n}}\rightarrow 0
\end{equation*}

This resolution will play a crucial role in the proof of
Theorem \ref{intro1}.

\subsection{Duality for finite morphisms.}\label{duality}
We recall here 
a well-known duality statement.
Let $f:Y\rightarrow X$ be a finite surjective morphism of normal projective
schemes. Let $\omega_X$ be a dualizing sheaf for $X$
and assume that $\omega_X$ is invertible. 
By \cite[Chapter III, Exercise 7.2 (a)]{ag}, we have that 
$\omega_Y = f^! \omega_X = \sheafhom _X(f_* \mathcal{O}_Y, \omega_X)$ is a dualizing sheaf for $Y$ (see also \cite[Chapter III, Exercise 6.10]{ag}).
Therefore we get
\[
f_* \omega_Y \cong \sheafhom_X(f_* \mathcal{O}_Y, \omega_X)
\]

so that
\[
f_* \omega_{Y/X}\cong (f_* \mathcal{O}_Y)^*
\]

Since $X$ and $Y$ are normal, we have that $f_* \mathcal{O}_Y$ is split by the trace morphism.  In particular,
the structure sheaf $\mathcal{O}_X$ is canonically a direct summand of
$f_* \omega_{Y/X}$.

\section{The very ample case}

In this section we present a first simple approach to the problem in the special case where $A$ is very ample.
Our hope is that this preliminary case may serve as a motivating example, and highlight some of the main difficulties of the problem, but we do not aim here for optimal bounds. Since the rest of the discussion does not rely on the methods of this section, the reader who wishes to do so may directly skip to Section \ref{theproof}.

Recall that our aim is to show property $N_p$ and property Koszul via Theorem \ref{greendiagonal} and Theorem \ref{mehtadiagonal} respectively. 
A first natural approach is then to try using well known generalizations of Kodaira's vanishing theorem (i.e. Kawamata-Viehweg and Nadel vanishing theorems).
Let us briefly recall the definition of multiplier ideals.

\begin{definition}
Let $(X,\Delta)$ be a log pair with $X$ smooth, and let $\mu : Y\rightarrow X$ be a log resolution. We define the multiplier ideal sheaf of $\Delta$ to be
\[
   \mathcal{I}(X,\Delta) = \mu _* \mathcal{O}_Y (K_{Y/X} - \lfloor \mu ^* \Delta \rfloor) \subseteq \mathcal{O}_X
\]
\end{definition}

\begin{theorem}[Nadel's vanishing theorem]
Let $X$ be a smooth complex projective variety and $\Delta\geqslant 0$ a $\mathbb{Q}$-divisor on $X$. Let $L$ be any integral divisor such that $L-\Delta$ is big 
and nef. Then 
\[
H^i (X, \mathcal{O}_X (K_X + L)\otimes \mathcal{I}(X,\Delta))=0
\]

for $i>0$.
\end{theorem}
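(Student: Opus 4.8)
The plan is to reduce the statement to the Kawamata--Viehweg vanishing theorem on a log resolution. I would fix a log resolution $\mu:Y\rightarrow X$ of the pair $(X,\Delta)$, so that by definition $\mathcal{I}(X,\Delta)=\mu_*\mathcal{O}_Y(K_{Y/X}-\lfloor \mu^*\Delta\rfloor)$. Since $\mathcal{O}_X(K_X+L)$ is locally free, the projection formula gives
\[
\mathcal{O}_X(K_X+L)\otimes \mathcal{I}(X,\Delta)\cong \mu_*\left(\mu^*\mathcal{O}_X(K_X+L)\otimes \mathcal{O}_Y(K_{Y/X}-\lfloor \mu^*\Delta\rfloor)\right).
\]
Because $K_{Y/X}+\mu^*K_X=K_Y$, the sheaf inside $\mu_*$ is exactly $\mathcal{O}_Y(K_Y+\mu^*L-\lfloor \mu^*\Delta\rfloor)$. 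The strategy is then to prove the vanishing of the cohomology of this sheaf upstairs on $Y$, and afterwards to descend it to $X$.

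For the computation on $Y$, I would rewrite the integral divisor $\mu^*L-\lfloor \mu^*\Delta\rfloor$ as a round-up. Since $\mu^*L$ is integral we have $\mu^*L-\lfloor \mu^*\Delta\rfloor=\lceil \mu^*(L-\Delta)\rceil$, so that
\[
\mathcal{O}_Y(K_Y+\mu^*L-\lfloor \mu^*\Delta\rfloor)\cong \mathcal{O}_Y(K_Y+\lceil N\rceil), \qquad N:=\mu^*(L-\Delta).
\]
Here $N$ is nef and big, since $L-\Delta$ is nef and big and these properties are preserved under pullback by the birational morphism $\mu$; moreover the fractional part $\{N\}$ is supported on $\operatorname{Ex}(\mu)\cup \operatorname{Supp}(\mu^{-1}(\Delta))$, which is a simple normal crossings divisor by the choice of log resolution. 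The Kawamata--Viehweg vanishing theorem then yields
\[
H^i(Y,\mathcal{O}_Y(K_Y+\lceil N\rceil))=0 \qquad \text{for all } i>0.
\]

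It remains to transfer this vanishing to $X$. The key input is the local vanishing statement $R^j\mu_*\mathcal{O}_Y(K_{Y/X}-\lfloor \mu^*\Delta\rfloor)=0$ for $j>0$, which is the relative form of Kawamata--Viehweg vanishing. Combined with the projection formula this gives $R^j\mu_*\mathcal{O}_Y(K_Y+\mu^*L-\lfloor \mu^*\Delta\rfloor)=0$ for $j>0$, while the zeroth direct image recovers $\mathcal{O}_X(K_X+L)\otimes \mathcal{I}(X,\Delta)$. Feeding this into the Leray spectral sequence for $\mu$ collapses it to isomorphisms
\[
H^i\bigl(X,\mathcal{O}_X(K_X+L)\otimes \mathcal{I}(X,\Delta)\bigr)\cong H^i\bigl(Y,\mathcal{O}_Y(K_Y+\lceil N\rceil)\bigr),
\]
and the right-hand side vanishes for $i>0$ by the previous step, completing the argument.

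The main obstacle is the local vanishing of the higher direct images: this is precisely where one needs the relative, rather than the absolute, Kawamata--Viehweg theorem, and it is what guarantees that the multiplier ideal computes cohomology as if we were working on the smooth model $Y$. Everything else is a bookkeeping of round-downs and round-ups together with the elementary fact that nefness and bigness pass to pullbacks under the birational morphism $\mu$.
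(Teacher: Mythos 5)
Your proof is correct, and it is the standard argument for Nadel's vanishing theorem: pass to a log resolution, rewrite $\mu^*L-\lfloor\mu^*\Delta\rfloor$ as the round-up $\lceil\mu^*(L-\Delta)\rceil$ of a nef and big $\mathbb{Q}$-divisor with simple normal crossings fractional part, apply Kawamata--Viehweg vanishing upstairs, and descend via local vanishing of the higher direct images together with the Leray spectral sequence. Note that the paper itself offers no proof here --- it quotes Nadel's theorem as a known background result (it is Theorem 9.4.8 in Lazarsfeld's \emph{Positivity in Algebraic Geometry II}, whose proof is exactly the one you gave) --- so there is no alternative argument in the paper to compare against; your write-up correctly identifies the one genuinely nontrivial input, namely the relative (local) vanishing $R^j\mu_*\mathcal{O}_Y(K_{Y/X}-\lfloor\mu^*\Delta\rfloor)=0$ for $j>0$, which holds because $\mu$ is birational, so $\mu^*(L-\Delta)$ is automatically $\mu$-nef and $\mu$-big.
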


The above vanishing closely resembles the one needed in Theorem \ref{greendiagonal} and \ref{mehtadiagonal}. In fact, one only has to arrange for the given diagonal ideals to be multiplier ideals of appropriate divisors. To this end, let $M_1, \cdots, M_{n+1}$ be general divisors
in 
\[
\wedge^2 H^0(X,A)\subseteq H^0(X,A)\otimes H^0(X,A)\cong H^0(X\times X, A\boxtimes A)
\]

and let $M=(M_1 + \cdots + M_{n+1})/(n+1)$. An easy check shows that
\[
\mathcal{I}(X\times X, n\cdot M)=\mathcal{I}_{\Delta_X}
\]

Furthermore, if we take 
\[
M(r)=\sum_{j=2} ^{r} \operatorname{pr}_{1,j} ^* M
\]

we see that 
\[
\mathcal{I}(X(r), n\cdot M(r))=\mathcal{I}_{\Delta_{X(r)}} ^1 (r)
\]

By Nadel's vanishing theorem and by Theorem \ref{greendiagonal}, we get:

\begin{theorem}\label{veryamplenp}
Let $X$ be a smooth projective variety of dimension $n$, let $A$ be a very ample divisor and let $B$ be a nef divisor. 
Set $L_m = \mathcal{O}_X(K_X+mA+B)$ for some $m\geqslant 0$.
Then $L_m$ satisfies property $N_p$ if $m\geqslant np+n+1$.
\end{theorem}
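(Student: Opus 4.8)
The plan is to verify the hypothesis of Theorem \ref{greendiagonal} with $E=\mathcal{O}_X$ and $L=L_m$, namely that
\[
H^1\bigl(X(r),\ \mathcal{I}_{\Delta_{X(r)}}^1(r)\otimes L_m^{\otimes k}\boxtimes L_m\boxtimes\cdots\boxtimes L_m\bigr)=0
\]
for all $2\le r\le p+2$ and all $k\ge 1$. Since these vanishings would hold over the whole range $2\le r\le p+2$, Theorem \ref{greendiagonal} applied with $p$ replaced by any $r_0\le p$ then yields $\mathcal{K}_{r_0,q}(X,L_m)=0$ for $q\ge 2$, which is precisely property $N_p$. (Here $L_m$ is globally generated, so the Koszul-cohomology reformulation applies: a Castelnuovo--Mumford regularity argument with respect to $A$ shows $H^i(X,L_m\otimes A^{\otimes -i})=H^i(X,K_X+(m-i)A+B)=0$ for $i>0$ by Kodaira vanishing, using $(m-i)A+B$ ample for $1\le i\le n\le m-1$, so $L_m$ is $0$-regular and in particular globally generated and nef.) First I would invoke the identity $\mathcal{I}(X(r),n\,M(r))=\mathcal{I}_{\Delta_{X(r)}}^1(r)$ established above, so that the desired vanishing becomes an instance of Nadel's vanishing theorem, provided the twisting line bundle has the form $\mathcal{O}_{X(r)}(K_{X(r)}+N)$ with $N-n\,M(r)$ big and nef.

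Next I would carry out the positivity bookkeeping. Writing $\mathcal{L}$ for the bundle appearing above and using $K_{X(r)}=\sum_{j=1}^r\operatorname{pr}_j^*K_X$ together with $M(r)\sim (r-1)\operatorname{pr}_1^*A+\sum_{j=2}^r\operatorname{pr}_j^*A$, I would set $N=\mathcal{L}-K_{X(r)}$ and compute
\[
N-n\,M(r)\ \sim\ \operatorname{pr}_1^*D_1+\sum_{j=2}^r\operatorname{pr}_j^*D_j,
\]
where $D_j=(m-n)A+B$ for $j\ge 2$ and $D_1=(k-1)L_m+(m-n(r-1))A+B$. For $j\ge 2$ the divisor $(m-n)A$ is ample, since $m-n\ge np+1\ge 1$, so $D_j$ is ample; and because $r-1\le p+1$, the coefficient $m-n(r-1)\ge m-n(p+1)\ge 1$ is positive, so $(m-n(r-1))A$ is ample as well. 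Hence each $D_j$ is ample on $X$, and therefore $N-n\,M(r)=D_1\boxtimes\cdots\boxtimes D_r$ is ample on $X(r)$, in particular big and nef.

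The step requiring genuine care, and the reason the argument is uniform in $k$, is the first factor: the bundle $L_m^{\otimes k}$ contributes an unbounded term $(k-1)K_X$ once the single copy of $K_X$ coming from $K_{X(r)}$ is subtracted, and since $K_X$ carries no positivity this cannot be absorbed directly. The device is to regroup $D_1=(k-1)L_m+(m-n(r-1))A+B$, so that the $(k-1)$-fold multiple is taken of the nef divisor $L_m$ rather than of $K_X$; the remainder $(m-n(r-1))A+B$ is ample by the bound $m\ge np+n+1$. With $N-n\,M(r)$ big and nef, Nadel's vanishing theorem gives $H^i(X(r),\mathcal{O}_{X(r)}(K_{X(r)}+N)\otimes\mathcal{I}(X(r),n\,M(r)))=0$ for $i>0$; since $K_{X(r)}+N=\mathcal{L}$ and $\mathcal{I}(X(r),n\,M(r))=\mathcal{I}_{\Delta_{X(r)}}^1(r)$, the case $i=1$ is exactly the input of Theorem \ref{greendiagonal}, completing the proof. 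I expect the only real subtleties to be this first-factor bookkeeping and the nefness and global generation of $L_m$; the remaining essential ingredient, the identification of the diagonal ideal as a multiplier ideal, has already been recorded and is precisely where the very ampleness of $A$ is used.
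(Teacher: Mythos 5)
Your proposal is correct and follows essentially the same route as the paper: identify $\mathcal{I}_{\Delta_{X(r)}}^1(r)$ with the multiplier ideal $\mathcal{I}(X(r),n\,M(r))$, apply Nadel's vanishing theorem, and feed the resulting $H^1$ vanishing into Theorem \ref{greendiagonal}. The paper leaves the positivity bookkeeping implicit, whereas you carry it out explicitly --- in particular the regrouping $kL_m-K_X=(k-1)L_m+mA+B$ with $L_m$ nef, which is exactly the point that makes the vanishing uniform in $k$.
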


By taking

\[
M(r)=\sum_{j=2} ^{r} \operatorname{pr}_{j-1,j} ^* M
\]

a similar discussion shows:

\begin{theorem}\label{veryamplekoszul}
Let $X$ be a smooth projective variety of dimension $n$, let $A$ be a very ample divisor and let $B$ be a nef divisor. 
Set $L_m = \mathcal{O}_X(K_X+mA+B)$ for some $m\geqslant 0$.
Then $R(X,L_m)$ is Koszul if $m\geqslant 2n+1$.
\end{theorem}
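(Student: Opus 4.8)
The plan is to run the very same argument that proves Theorem \ref{veryamplenp}, but feeding the Koszul criterion of Theorem \ref{mehtadiagonal} instead of Theorem \ref{greendiagonal}. Thus I need to establish the two vanishings
\[
H^1\bigl(X(r), \mathcal{I}_{\Delta_{X(r)}}^2(r)\otimes L_m\boxtimes\cdots\boxtimes L_m\boxtimes L_m^{\otimes k}\bigr)=0
\]
and
\[
H^1\bigl(X(r), \mathcal{I}_{\Delta_{X(r)}^{r-1,r}}\otimes L_m\boxtimes\cdots\boxtimes L_m\boxtimes L_m^{\otimes k}\bigr)=0
\]
for all $r\geqslant 2$ and $k\geqslant 1$. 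As indicated before the statement, taking $M(r)=\sum_{j=2}^{r}\operatorname{pr}_{j-1,j}^* M$ one has $\mathcal{I}(X(r), n\cdot M(r))=\mathcal{I}_{\Delta_{X(r)}}^2(r)$, and similarly $\mathcal{I}(X(r), n\cdot \operatorname{pr}_{r-1,r}^* M)=\mathcal{I}_{\Delta_{X(r)}^{r-1,r}}$. This is the same general-position computation as the $r=2$ case recorded before Theorem \ref{veryamplenp}, combined with Corollary \ref{globalpicture} to identify the product of the individual diagonal ideals with their intersection. Both vanishings will then follow from Nadel's theorem, once I write the relevant twist in the form $K_{X(r)}+L$ with $L-n\cdot M(r)$ big and nef.

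The core is a factor-by-factor bookkeeping. Since $K_{X(r)}=\sum_{i=1}^{r}\operatorname{pr}_i^* K_X$ and $L_m-K_X=mA+B$, I split $L$ over the factors: on each of the first $r-1$ factors it contributes $mA+B$, while on the last ("heavy") factor it contributes $(k-1)L_m+(mA+B)$, using the identity $kL_m=K_X+\bigl((k-1)L_m+(mA+B)\bigr)$. Numerically $M\equiv A\boxtimes A$, so in the chain $M(r)$ the divisor $n\cdot M(r)$ subtracts $nA$ from each of the two endpoint factors and $2nA$ from every interior factor, an interior factor meeting two consecutive diagonals. Hence $L-n\cdot M(r)$ restricts factor by factor to
\[
(m-n)A+B \quad\text{on the light endpoint factor,}
\]
\[
(m-2n)A+B \quad\text{on each interior factor,}
\]
\[
(k-1)L_m+(m-n)A+B \quad\text{on the heavy factor.}
\]
A box sum of divisors is big and nef exactly when every summand is, so it suffices that each line above be big and nef. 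For the heavy factor I use that $L_m$ is nef once $m\geqslant n+1$ — because $A$ very ample makes $K_X+(n+1)A$ globally generated by Castelnuovo--Mumford regularity — so that $(k-1)L_m$ is nef for every $k\geqslant 1$. The light endpoint factor needs only $m\geqslant n+1$, whereas the interior factors force $m\geqslant 2n+1$. For the second vanishing only a single diagonal occurs, so no factor is hit twice and the weaker bound $m\geqslant n+1$ is enough; the binding constraint is therefore $m\geqslant 2n+1$, as asserted.

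I expect the interior factors to be the decisive point: being attached to two neighbouring diagonals, they lose $2nA$, and this is precisely what upgrades the endpoint-type bound $m\geqslant n+1$ to $m\geqslant 2n+1$. A secondary obstacle is absorbing the high power $L_m^{\otimes k}$ on the heavy factor without destroying positivity, which I handle by peeling off one copy of $K_X$ and invoking nefness of $L_m$. Both steps lean on very ampleness of $A$: the identification of the diagonal sheaves with multiplier ideals of the $M(r)$ requires the sections in $\wedge^2 H^0(X,A)$ to cut out the diagonals with the expected reduced, general-position behaviour, and the nefness of $K_X+(n+1)A$ is the regularity input that genuinely needs $A$ very ample. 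The one step I would verify with care is that generality of the $M_i$ really makes the multiplier ideal of the sum $n\cdot M(r)$ equal to the intersection of the diagonal ideals $\mathcal{I}_{\Delta_{X(r)}^{j-1,j}}$, although this is formally identical to the $r=2$ computation already in hand.
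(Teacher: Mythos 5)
Your proposal is correct and follows exactly the route the paper intends: the paper's entire proof of Theorem \ref{veryamplekoszul} is the remark that taking $M(r)=\sum_{j=2}^{r}\operatorname{pr}_{j-1,j}^*M$ and repeating the discussion before Theorem \ref{veryamplenp} (multiplier-ideal identification of $\mathcal{I}_{\Delta_{X(r)}}^2(r)$, Nadel vanishing, and the criterion of Theorem \ref{mehtadiagonal}) yields the statement. Your factor-by-factor bookkeeping --- endpoints losing $nA$, interior factors losing $2nA$ (whence the bound $2n+1$), and the heavy factor handled via nefness of $L_m$ for $m\geqslant n+1$ --- is precisely the computation the paper leaves implicit.
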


Theorem \ref{veryamplenp} and Theorem \ref{veryamplekoszul} recover the main results of \cite{einlazarsfeld1} and \cite{pareschikoszul} respectively, with weaker bounds. We will show below how to extend these results to the case in which $A$ is only ample and basepoint free and how to strengthen them to optimal bounds. 
For the moment, let us just remark that the above method no longer works. A first problem is that we can no longer hope to cut down to the diagonal $\Delta_X$ by using sections in $\wedge^2 H^0(X,A)$. 
If we try to do this, we instead get an \say{enlarged diagonal}, say
$\Gamma_X$. A more serious problem is that
the associated multiplier ideals give rise to 
non-reduced schemes.
In fact, by Skoda's theorem \cite[Theorem 9.6.21]{positivity2}, we have that

\[
\mathcal{I}(X\times X, n\cdot M)=\mathcal{I}_{\Gamma_X}\cdot
\mathcal{I}(X\times X, (n-1)\cdot M)
\]

In general, however, 
\[
\mathcal{I}(X\times X, (n-1)\cdot M)\neq \mathcal{O}_{X\times X}
\]

and therefore it is not clear how to relate 
$\mathcal{I}(X\times X, n\cdot M)$ with $\mathcal{I}_{\Delta_X}$.
We show below how to circumvent both problems via certain resolutions of the diagonal and duality theory. 

\section{Proof}\label{theproof}

We are now ready to start the proof of Theorem \ref{intro1}. 
For the reader's convenience, we divide the proof in several smaller steps. 

\subsection{The setup.}\label{setup} Let $X$ be a smooth projective variety of dimension $n$ and let $A$ be an ample and basepoint free divisor. Let $V\subseteq H^0(X, A)$ be a basepoint free subspace of dimension $n+1$. Let $\alpha : X\rightarrow \mathbb{P}^n$ be the corresponding finite flat morphism. 
Let $\beta: Y\rightarrow X$ be the Galois closure of $\alpha$
and let $\delta=\alpha\circ \beta$.
Let $G=\operatorname{Gal}(Y/\mathbb{P}^n)$ and $H=\operatorname{Gal}(Y/X)$.
As usual, we denote by $\Delta_X\subseteq X\times X$ and $\Delta_Y\subseteq Y\times Y$ the respective diagonals. 
Let $\Gamma_Y$ and $\Gamma_X$
be the schemes
defined by $\mathcal{I}_{\Delta_{\mathbb{P}^n}}\cdot \mathcal{O}_{Y\times Y}$
and $\mathcal{I}_{\Delta_{\mathbb{P}^n}}\cdot \mathcal{O}_{X\times X}$ respectively. 
We have $\Gamma_Y = Y\times_{\mathbb{P}^n} Y$ and
$\Gamma_X = X\times_{\mathbb{P}^n} X$.
We set $\mathcal{I}_{\Gamma_{Y(r)}}^j (s)=\mathcal{I}_{\Delta_{\mathbb{P}(r)}}^j(s)\cdot \mathcal{O}_{Y(r)}$ and 
$\mathcal{I}_{\Gamma_{X(r)}}^j(s)=\mathcal{I}_{\Delta_{\mathbb{P}(r)}}^j (s)\cdot \mathcal{O}_{X(r)}$ for $j=1,2$, and we denote
by $\Gamma_{Y(r)}^j(s)$ the corresponding schemes. Finally, we denote by $\Delta_{Y(r)}^j(s)$
and $\Delta_{X(r)}^j(s)$ the schemes defined by
$\mathcal{I}_{\Delta_{Y(r)}}^j (s)$ and 
$\mathcal{I}_{\Delta_{X(r)}}^j (s)$ for $j=0,1$.

\subsection{Enlarged diagonals.} 
Here we use Beilinson's resolution $(1)$ to get resolutions of the sheaves $\mathcal{I}_{\Delta_{\mathbb{P}^n(r)}}^j(s)$. Although these 
\say{resolutions} do not consist of locally free sheaves, they will prove to be equally
useful for our purposes. We start our study by pulling back
Beilinson's resolution $(1)$ to $\mathbb{P}^n (r)$ via $\operatorname{pr}_{j_1,j_2}$ for $1\leqslant j_1<j_2\leqslant r$ to get
the following exact sequences
\begin{align*}
0\rightarrow \operatorname{pr}_{j_1, j_2} ^* \wedge^n (\mathcal{O}_{\mathbb{P}^n} (-1)\boxtimes M)\rightarrow \cdots &\rightarrow \operatorname{pr}_{j_1,j_2} ^* \mathcal{O}_{\mathbb{P}^n} (-1)\boxtimes M \rightarrow\\
&\rightarrow \tag{$j_1.j_2$}
\mathcal{I}_{\Delta_{\mathbb{P}^n} ^{j_1,j_2}}\rightarrow 0
\end{align*}

We consider each sequence $(j_1.j_2)$ as an exact complex positively graded
and with the ideal sheaf in degree zero, which we call $C(j_1, j_2, n, r)$.
Let
\[
C^1(s, n, r)=C(1,2,n,r)\otimes C(1,3,n,r)\otimes \cdots
\otimes C(1, s, n, r)
\]

and

\[
C^2(s, n, r)=C(1,2,n,r)\otimes C(2,3,n,r)\otimes \cdots
\otimes C(s-1, s, n, r)
\]

Let $TC^1(s,n,r)$ and $TC^2(s,n,r)$ be the corresponding total complexes. An application of Corollary \ref{resolution} gives:

\begin{lemma}\label{exact}
Let $0<s\leqslant r$ and $n$ be positive integers.
Then $C^1(s,n,r)$, $C^2(s,n,r)$, $TC^1(s,n,r)$ and $TC^2(s,n,r)$ are exact.
\end{lemma}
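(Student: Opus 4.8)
The plan is to obtain the lemma as the promised application of Corollary \ref{resolution}, carried out stalk by stalk. Exactness of a complex, or of a multicomplex in each of its directions, may be checked on stalks, and a complex of finite modules over a Noetherian local ring is exact if and only if it stays exact after completion; so it suffices to work in $\widehat{\mathcal{O}}_{\mathbb{P}^n(r),P}$ at an arbitrary closed point $P=(p_1,\ldots,p_r)$. As a preliminary observation, each factor $C(j_1,j_2,n,r)$ is itself exact, being the pullback of the exact Beilinson sequence $(1)$ along the flat projection $\operatorname{pr}_{j_1,j_2}$; its positive-degree terms are locally free and its degree-zero term is the ideal sheaf $\mathcal{I}_{\Delta_{\mathbb{P}^n}^{j_1,j_2}}$.

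Next I would match the local picture with Subsection \ref{algebra}. Writing $A=\widehat{\mathcal{O}}_{\mathbb{P}^n,p}\cong\mathbb{C}[[t_1,\ldots,t_n]]$, the ring $\widehat{\mathcal{O}}_{\mathbb{P}^n(r),P}$ is the completion of the tensor power $A(r)$, and the $\operatorname{Tor}$-vanishing of Lemma \ref{localpicture} survives the flat completion map. For an index $i$ with $p_1=p_i$ the diagonal $\Delta_{\mathbb{P}^n}^{1,i}$ meets $P$, and the coordinate change $u_i=x_i-x_1$ invoked in the proof of Corollary \ref{globalpicture} turns its localized ideal into $m_i$; the chain diagonals $\Delta^{i-1,i}$ are handled identically by $u_i=x_i-x_{i-1}$. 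Under this identification the localization of $C(1,i,n,r)$ (respectively $C(i-1,i,n,r)$) is an acyclic complex with degree-zero term $m_i$ and free terms above, that is, exactly a factor $C^i$ of the kind appearing in Corollary \ref{resolution}; since the acyclicity it asserts follows purely from the vanishing of the $\operatorname{Tor}$ groups among the ideals $m_i$ (Lemma \ref{localpicture}), it applies to the tensor product of these through-$P$ factors regardless of the particular free terms, and shows that its total complex, and each partial tensor product, is exact.

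It then remains to absorb the factors whose diagonal avoids $P$: when $p_1\neq p_i$ (respectively $p_{i-1}\neq p_i$) the relevant ideal localizes to the unit ideal, so that factor localizes to an acyclic complex of free modules, and tensoring an acyclic total complex with a bounded complex of free modules preserves acyclicity. Hence $C^1(s,n,r)$, $C^2(s,n,r)$, $TC^1(s,n,r)$ and $TC^2(s,n,r)$ are exact at $P$, and the lemma follows as $P$ was arbitrary. I expect the main obstacle to be the bookkeeping in this reduction: verifying that the two combinatorially different configurations underlying $C^1(s,n,r)$ and $C^2(s,n,r)$ --- the \say{star} of diagonals through the first factor and the \say{chain} of consecutive diagonals --- both collapse, after the appropriate coordinate change, to the single model of ideals $m_i$ supported on distinct tensor factors, and that the mixed case in which only some diagonals pass through $P$ assembles correctly. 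One could equally avoid localization altogether by running the spectral sequence of the tensor double complex directly on $\mathbb{P}^n(r)$ and feeding in the global $\sheaftor$-vanishing of Corollary \ref{globalpicture}.
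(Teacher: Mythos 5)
Your proof is correct and takes essentially the same route as the paper: the paper's entire proof is the invocation of Corollary \ref{resolution}, which amounts exactly to the stalk-wise reduction you perform, identifying the diagonal ideals with the ideals $m_j$ of Subsection \ref{algebra} after a coordinate change (as in Corollary \ref{globalpicture}). You simply make explicit the details the paper leaves implicit --- completion, the collapse of both the star and chain configurations to the same local model, and the factors whose diagonal misses the chosen point.
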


Write $M_V=\alpha^* M$.
Pulling back Beilinson's resolution $(1)$ to $X$ via $\alpha$ gives a resolution of $\Gamma_X$:

\begin{align*}
0\rightarrow \wedge^n (\mathcal{O}_X(-A)\boxtimes M_V)
\rightarrow \cdots 
&\rightarrow \mathcal{O}_X(-A)\boxtimes M_V\rightarrow \\
& \rightarrow \tag{2}
\mathcal{I}_{\Gamma_X}\rightarrow 0
\end{align*}

Similarly, pulling back the complexes $C(j_1, j_2, n, r)$, $C^1(s,n,r)$ 
and $C^2(s,n,r)$ to $X(r)$ gives exact complexes, which we denote by $C(X, j_1, j_2, r)$, $C^1(X,s,r)$ and $C^2(X,s,r)$ respectively. 

\subsection{Vanishing.} Here we prove the crucial vanishing results in view of Theorem \ref{greendiagonal}.
\begin{lemma}\label{vanishing1}
Let $L_m=\mathcal{O}_{X} (K_X+mA)$ for some $m\geqslant n+1$.
Then
\[
H^i (X, L_m ^{\otimes k} \otimes \mathcal{O}_X(lA))=0
\]
for any $k\geqslant 1$, $m+l\geqslant 1$, and $1\leqslant i\leqslant n$.
\end{lemma}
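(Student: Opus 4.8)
The plan is to rewrite the bundle so that Kodaira vanishing applies directly. Writing everything additively,
\[
kK_X + (km+l)A = K_X + \underbrace{(k-1)(K_X+mA) + (m+l)A}_{=:D},
\]
so that $L_m^{\otimes k}\otimes\mathcal{O}_X(lA)=\mathcal{O}_X(K_X+D)$. Since $\dim X=n$, the asserted vanishing for $1\leqslant i\leqslant n$ is equivalent to $H^i(X,\mathcal{O}_X(K_X+D))=0$ for all $i>0$, which by Kodaira vanishing follows once $D$ is ample (or merely nef and big, invoking Kawamata--Viehweg). Now $(m+l)\geqslant 1$ by hypothesis, so $(m+l)A$ is ample, and $k-1\geqslant 0$; thus $D$ is a nonnegative multiple of $K_X+mA$ plus an ample divisor. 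Hence it suffices to prove that $K_X+mA$ is nef for every $m\geqslant n+1$, for then $D$ is nef $+$ ample $=$ ample. (When $k=1$ this degenerates: $D=(m+l)A$ is already ample and the statement is pure Kodaira vanishing.)

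The key input is therefore the nefness of $K_X+mA$, which I would extract from the finite flat cover $\alpha\colon X\to\mathbb{P}^n$ of the setup. Since $\omega_{\mathbb{P}^n}=\mathcal{O}(-n-1)$ and $\alpha^*\mathcal{O}(1)=\mathcal{O}_X(A)$, the relative dualizing sheaf is $\omega_{X/\mathbb{P}^n}=\mathcal{O}_X(K_X+(n+1)A)$. I claim this is globally generated, hence nef; granting this, $K_X+mA=\omega_{X/\mathbb{P}^n}+(m-n-1)A$ is a sum of nef divisors for $m\geqslant n+1$, as needed. To prove the global generation I would argue fibrewise: $\alpha$ is finite flat with smooth source, so its scheme-theoretic fibres are Artinian Gorenstein $\mathbb{C}$-algebras, and the restriction of $\omega_{X/\mathbb{P}^n}$ to each fibre is the (cyclic) dualizing module of that fibre. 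Feeding this into the duality and trace splitting of Section~\ref{duality}, which exhibits $\alpha_*\omega_{X/\mathbb{P}^n}=(\alpha_*\mathcal{O}_X)^\vee$ with $\mathcal{O}_{\mathbb{P}^n}$ as a direct summand, should upgrade the fibrewise cyclicity to global generation on $X$. This is exactly the base-point-free case of Fujita's freeness statement.

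The main obstacle is precisely this last step: converting the fibrewise and duality information into honest global generation of $\omega_{X/\mathbb{P}^n}$, that is, controlling the positivity of $(\alpha_*\mathcal{O}_X)^\vee$ on $\mathbb{P}^n$ together with the surjectivity of the counit $\alpha^*\alpha_*\omega_{X/\mathbb{P}^n}\to\omega_{X/\mathbb{P}^n}$. Should this prove delicate, a fallback is an induction on $n=\dim X$ using a general smooth member $H\in|A|$: by adjunction $K_X|_H=K_H-A|_H$, so $\mathcal{F}|_H$ has the same shape on $H$ with $m$ replaced by $m-1\geqslant n$, while the kernel $\mathcal{F}(-A)$ has $l$ replaced by $l-1$. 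The subtle point there is the boundary case $m+l=1$, in which $\mathcal{F}(-A)$ falls just outside the inductive hypothesis and must be handled separately (again via the $k=1$ instance, where $D$ is already ample). I expect the nefness route to be cleanest, with the global generation of $K_X+(n+1)A$ being the one genuinely substantive point.
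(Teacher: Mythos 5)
Your reduction is exactly the right one, and it is what the paper's one\--line proof (``immediate application of Kodaira's vanishing theorem'') implicitly does: write $L_m^{\otimes k}\otimes\mathcal{O}_X(lA)=\mathcal{O}_X(K_X+D)$ with $D=(k-1)(K_X+mA)+(m+l)A$, observe that $D$ is ample once $K_X+mA$ is nef (nef plus ample is ample), and conclude by Kodaira. The genuine gap is that you never prove the one input you correctly isolate, namely the nefness of $K_X+mA$ for $m\geqslant n+1$: both of your proposed routes are left open by your own account (``the main obstacle,'' ``the subtle point''). This input, however, is a standard fact and does not require the cover\--theoretic machinery you sketch. Two clean ways to fill it: (i) Mori's cone theorem --- every $K_X$-negative extremal ray of $\overline{NE}(X)$ is spanned by a rational curve $C$ with $0<-K_X\cdot C\leqslant n+1$, while $A\cdot C\geqslant 1$ because $A$ is ample and Cartier, so $(K_X+(n+1)A)\cdot C\geqslant 0$; on the $K_X$-nonnegative part of the cone nefness is clear, hence $K_X+(n+1)A$ is nef (this needs only ampleness of $A$). (ii) Castelnuovo--Mumford regularity --- since $A$ is ample and basepoint free, Kodaira vanishing gives $H^i(X,\mathcal{O}_X(K_X+(n+1-i)A))=0$ for $i>0$, i.e.\ $\mathcal{O}_X(K_X+(n+1)A)$ is $0$-regular with respect to $A$, hence globally generated by Mumford's theorem, hence nef.

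Incidentally, your duality route does close up, but only by feeding in the same regularity input, so it is really argument (ii) pushed down to $\mathbb{P}^n$: since $\alpha$ is finite, the projection formula gives $H^i(\mathbb{P}^n,(\alpha_*\omega_{X/\mathbb{P}^n})(-i))\cong H^i(X,\mathcal{O}_X(K_X+(n+1-i)A))=0$ for $i>0$, so $\alpha_*\omega_{X/\mathbb{P}^n}\cong(\alpha_*\mathcal{O}_X)^\vee$ is $0$-regular on $\mathbb{P}^n$ and therefore globally generated; the counit $\alpha^*\alpha_*\omega_{X/\mathbb{P}^n}\to\omega_{X/\mathbb{P}^n}$ is surjective for any coherent sheaf under a finite morphism (locally it is $M\otimes_R S\to M$, $m\otimes s\mapsto sm$), and global generation of $\omega_{X/\mathbb{P}^n}=\mathcal{O}_X(K_X+(n+1)A)$ follows. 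The ``fibrewise Gorenstein cyclicity'' plays no role. In short: your architecture is sound and coincides with the paper's, but a complete proof must recognize that the nefness of $K_X+(n+1)A$ is a known consequence of the cone theorem or of Kodaira vanishing plus regularity, rather than something to be re\--derived from the Galois/trace constructions of the later sections.
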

\begin{proof}
Immediate application of Kodaira's vanishing theorem.
\end{proof}

\begin{lemma}\label{vanishing2}
Let $L_m =\mathcal{O}_{X} (K_X+mA)$
for some $m\geqslant n+1$. Then
\[
H^i(X, \wedge^ j M_V \otimes L_m ^{\otimes k}\otimes\mathcal{O}_X(lA))=0
\]

if $k\geqslant 1$ and either 

\begin{enumerate}
    \item $i>j$ and $m+l\geqslant 1$, or
    \item $i\geqslant 1$ and $m+l\geqslant n+2-j$.
\end{enumerate}
\end{lemma}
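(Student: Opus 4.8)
The plan is to deduce everything from Lemma~\ref{vanishing1} together with the Euler sequence on $\mathbb{P}^n$ and its pullback. Recall that $M=\Omega_{\mathbb{P}^n}(1)$ sits in $0\to M\to\mathcal{O}_{\mathbb{P}^n}^{\oplus(n+1)}\to\mathcal{O}_{\mathbb{P}^n}(1)\to 0$; since $\alpha$ is flat and $\alpha^*\mathcal{O}_{\mathbb{P}^n}(1)=\mathcal{O}_X(A)$, pulling back gives $0\to M_V\to\mathcal{O}_X^{\oplus(n+1)}\to\mathcal{O}_X(A)\to 0$, and because the quotient is a line bundle this yields for every $j$ the short exact sequence $0\to\wedge^j M_V\to\mathcal{O}_X^{\oplus\binom{n+1}{j}}\to\wedge^{j-1}M_V\otimes\mathcal{O}_X(A)\to 0$. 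Throughout I write $\mathcal{L}=L_m^{\otimes k}\otimes\mathcal{O}_X(lA)$, I use that cohomology vanishes trivially above degree $n$, and I note that $\wedge^j M_V=0$ for $j>n$, so I may assume $0\le j\le n$; the case $j=0$ is exactly Lemma~\ref{vanishing1}.

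For part $(1)$ I would induct upward on $j$. Tensoring the displayed sequence with $\mathcal{L}$ and taking the long exact sequence, the term $H^i(X,\mathcal{L})^{\oplus\binom{n+1}{j}}$ vanishes by Lemma~\ref{vanishing1} (here $i>j\ge1$ forces $i\ge2$ and $m+l\ge1$), while $H^{i-1}(X,\wedge^{j-1}M_V\otimes\mathcal{L}\otimes\mathcal{O}_X(A))$ vanishes by the inductive hypothesis applied in case $(1)$ with $j-1$, cohomological degree $i-1>j-1$, and twist $l+1$. Hence $H^i(X,\wedge^j M_V\otimes\mathcal{L})=0$. The point is that the hypothesis $i>j$ keeps every cohomology group that appears in positive degree, so Kodaira vanishing always applies.

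Part $(2)$ is the delicate one, because there $i$ may be as small as $1$ and the same induction runs into the term $H^0(X,\wedge^{j-1}M_V\otimes\mathcal{L}\otimes\mathcal{O}_X(A))$, which there is no reason to kill. To get around this I would pass to the dual bundle. Dualizing the pulled-back Euler sequence gives $0\to\mathcal{O}_X(-A)\to\mathcal{O}_X^{\oplus(n+1)}\to M_V^*\to 0$ with $M_V^*=\alpha^*T_{\mathbb{P}^n}(-1)$, and since now the sub is a line bundle we obtain $0\to\wedge^{j'-1}M_V^*\otimes\mathcal{O}_X(-A)\to\mathcal{O}_X^{\oplus\binom{n+1}{j'}}\to\wedge^{j'}M_V^*\to 0$. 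Because $\det M_V=\alpha^*\det\Omega_{\mathbb{P}^n}(1)=\alpha^*\mathcal{O}_{\mathbb{P}^n}(-1)=\mathcal{O}_X(-A)$ and $M_V$ has rank $n$, there is an isomorphism $\wedge^j M_V\cong\wedge^{n-j}M_V^*\otimes\mathcal{O}_X(-A)$, so that $H^i(X,\wedge^j M_V\otimes\mathcal{L})=H^i(X,\wedge^{n-j}M_V^*\otimes L_m^{\otimes k}\otimes\mathcal{O}_X((l-1)A))$. I would then prove, by induction on $j'=n-j$, that $H^i(X,\wedge^{j'}M_V^*\otimes L_m^{\otimes k}\otimes\mathcal{O}_X(l'A))=0$ whenever $i\ge1$ and $m+l'\ge j'+1$; translating back through $j'=n-j$, $l'=l-1$ recovers exactly the bound $m+l\ge n+2-j$ of case $(2)$.

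The induction for the dual statement closes cleanly: tensoring the dual wedge sequence with $L_m^{\otimes k}\otimes\mathcal{O}_X(l'A)$, the middle term has vanishing $H^i$ by Lemma~\ref{vanishing1} (since $m+l'\ge j'+1\ge1$), and the long exact sequence relates $H^i$ of the $\wedge^{j'}M_V^*$ term to $H^{i+1}$ of the $\wedge^{j'-1}M_V^*$ term with twist $l'-1$; the inductive hypothesis applies because $i+1\ge1$ and $m+(l'-1)\ge j'=(j'-1)+1$, with the base $j'=0$ being Lemma~\ref{vanishing1}. The main obstacle, and the reason the two parts are proved by opposite inductions, is precisely this: raising the cohomological degree at each step (rather than lowering it) avoids the stray $H^0$, and the twist drops by one at each step, matched term by term by the $+1$ in the bound $m+l'\ge j'+1$ — this is what forces the sharp value $n+2-j$ in the statement.
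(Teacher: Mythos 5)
Your proof is correct and takes essentially the same approach as the paper: part (1) is the identical ascending induction on $j$ using the wedge powers of the pulled-back Euler sequence, and part (2), though phrased via the isomorphism $\wedge^j M_V \cong \wedge^{n-j} M_V^* \otimes \det M_V$, is the paper's descending induction on $j$ (base case $\wedge^n M_V \cong \mathcal{O}_X(-A)$, killed by Kodaira) in disguise. Indeed, twisting your dual exact sequence by $\det M_V = \mathcal{O}_X(-A)$ recovers exactly the sequence $0\to \wedge^j M_V\to \wedge^j V\otimes\mathcal{O}_X\to \wedge^{j-1}M_V\otimes\mathcal{O}_X(A)\to 0$ that the paper re-uses, so your ascending induction on $j'=n-j$ coincides term by term with the paper's descending induction, including the key observation that running the induction in this direction raises the cohomological degree and thereby avoids the uncontrollable $H^0$ terms.
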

\begin{proof}
For $(1)$, we proceed by induction on $j$, starting with the case $j=1$. Consider the short exact sequence
\[
0\rightarrow M_V \rightarrow V\otimes_{\mathbb{C}}
\mathcal{O}_X\rightarrow \mathcal{O}_X(A)\rightarrow 0
\]

Then the result follows immediately from Lemma \ref{vanishing1}. Suppose therefore that the result holds for $j-1$ and let us prove it for $j$. Consider the short exact sequence

\begin{equation*}\tag{$*$}
0\rightarrow \wedge^j M_V \rightarrow 
\wedge^j V\otimes_{\mathbb{C}} \mathcal{O}_X
\rightarrow \wedge^{j-1} M_V \otimes
\mathcal{O}_X(A)\rightarrow 0    
\end{equation*}

The statement follows then immediately by the inductive hypothesis. Now we prove $(2)$ by descending induction on $j$, starting with the case $j=n$. In this case $\wedge^n M_V\cong \mathcal{O}_X(-A)$ and the statement follows from Kodaira's vanishing theorem. Suppose then that the $(2)$ holds for $j$, and let us prove it for $j-1$. Consider again the short exact sequence $(*)$. Then statement follows then by Lemma \ref{vanishing1} and
the inductive hypothesis. 
\end{proof}

\begin{lemma}\label{vanishing3}
Fix $r\geqslant 2$.
Let $L_m =\mathcal{O}_{X} (K_X+mA)$
for some $m\geqslant n+r-1$.
Let $i_t\geqslant 0$ for $1\leqslant t\leqslant r$ and $a_t \geqslant 1$ for $1\leqslant t\leqslant r-1$ be two sequences of positive integers. Let $\sum_{t=1} ^r i_t = i$ and $\sum_{t=1} ^{r-1} a_t = d$. Assume that $i\geqslant d-r+2$.
Then
\[
 H^{i_1}(X, \mathcal{O}_X(-dA)\otimes L_m ^{\otimes k}) \otimes \bigotimes_{t=2} ^r H^{i_t}(X, \wedge^{a_{t-1}} M_V \otimes L_m)=0
\]

for any $k\geqslant 1$.
\end{lemma}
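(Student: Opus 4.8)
The plan is to prove the stated vanishing by showing that at least one factor of the tensor product is already zero, so I would argue by contradiction: assume that \emph{every} factor is nonzero and derive a contradiction with the numerical hypotheses $\sum_t i_t = i$, $\sum_t a_t = d$ and $i \geqslant d-r+2$. The only inputs are Lemma \ref{vanishing1} and Lemma \ref{vanishing2}, and the whole argument amounts to bookkeeping of cohomological degrees against these constraints together with the a priori bound $i_t \leqslant \dim X = n$.

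First I would analyze the factors indexed by $t\geqslant 2$, namely $H^{i_t}(X, \wedge^{a_{t-1}} M_V \otimes L_m)$. Here Lemma \ref{vanishing2} applies with $j = a_{t-1}\geqslant 1$, $k=1$ and $l=0$. Since $m \geqslant n+r-1$ and $r\geqslant 2$, one checks $m \geqslant n+2-a_{t-1}$ (equivalently $a_{t-1}\geqslant 3-r$, which holds), so part (2) of Lemma \ref{vanishing2} forces this group to vanish as soon as $i_t \geqslant 1$. Hence nonvanishing of the $t$-th factor requires $i_t = 0$. If all factors with $t\geqslant 2$ are nonzero, then $i_t=0$ for every $t\geqslant 2$, and therefore the entire cohomological degree is concentrated in the first factor: $i = i_1$.

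Next I would treat the first factor $H^{i_1}(X, \mathcal{O}_X(-dA)\otimes L_m^{\otimes k})$. Since $d=\sum_{t=1}^{r-1}a_t\geqslant r-1$, the hypothesis $i_1 = i \geqslant d-r+2$ rules out $i_1=0$ (which would give $d\leqslant r-2$), while $i_1 \leqslant n$ may be assumed since otherwise the group vanishes for dimension reasons. Thus $1\leqslant i_1\leqslant n$. Combining $i_1\leqslant n$ with $i_1\geqslant d-r+2$ yields $d\leqslant n+r-2$, whence $m-d\geqslant (n+r-1)-(n+r-2)=1$. Now Lemma \ref{vanishing1} applies with $l=-d$ and $1\leqslant i_1\leqslant n$, giving $H^{i_1}(X, \mathcal{O}_X(-dA)\otimes L_m^{\otimes k})=0$, the desired contradiction.

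I do not expect a genuine obstacle: once the two vanishing lemmas are in hand, the argument is a finite case analysis. The point that requires care---and which I view as the crux---is recognizing that the factors with $t\geqslant 2$ carry no higher cohomology, so that the full degree $i$ must be supported on the first factor; the interplay of the constraint $i\geqslant d-r+2$ with the bound $i_1\leqslant \dim X$ is then precisely what forces $m-d\geqslant 1$ and lets the Kodaira-type vanishing of Lemma \ref{vanishing1} close the argument.
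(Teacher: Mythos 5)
Your proposal is correct and takes essentially the same approach as the paper's proof: Lemma \ref{vanishing2}(2) forces $i_t=0$ for all $t\geqslant 2$, so $i_1=i$, and then $d\geqslant r-1$ together with the dimension bound $i_1\leqslant n$ gives $d\leqslant n+r-2$, letting Lemma \ref{vanishing1} finish. The only difference is cosmetic: you frame the case analysis as a proof by contradiction, while the paper runs it directly.
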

\begin{proof}
If $i_t>0$ for any $t\geqslant 2$, then we are done by Lemma \ref{vanishing2} $(2)$. Therefore we may assume that $i_1 = i$.
Since $d\geqslant r-1$, we have that $i_1\geqslant 1$. 
If $i_1>n$, then there is nothing to prove, so we may assume that $i_1\leqslant n$. Therefore,
\[
d\leqslant n+r-2
\]

and we may conclude by Lemma \ref{vanishing1}.
\end{proof}

\begin{lemma}\label{vanishing4}
Fix $r\geqslant 1$.
Let $L_m =\mathcal{O}_{X} (K_X+mA)$
for some $m\geqslant n+2$.
Let $i_t\geqslant 1$ for $1\leqslant t\leqslant r$ and $0\leqslant a_t \leqslant n$ for $0\leqslant t\leqslant r$ be two sequences of positive integers. 
Let $\sum_{t=1} ^r i_t = i$ and $\sum_{t=0} ^r a_t = d$.
Assume that $i\geqslant d-r$.
Then
\[
 \bigotimes_{t=1} ^r H^{i_t}(X, \wedge^{a_{t-1}} M_V\otimes L_m ^{\otimes k_t}\otimes \mathcal{O}_X(-a_t A))=0
\]

for any $k_t\geqslant 1$.
\end{lemma}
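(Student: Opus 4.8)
The plan is to deduce the vanishing of the whole tensor product from the vanishing of a \emph{single} factor $H^{i_t}(X, \wedge^{a_{t-1}} M_V \otimes L_m^{\otimes k_t}\otimes \mathcal{O}_X(-a_t A))$, arguing by contradiction through the two independent vanishing ranges of Lemma \ref{vanishing2}. So I would assume that every factor is nonzero and extract, for each index $1\leqslant t\leqslant r$, one numerical inequality from each part of that lemma.

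First I would apply Lemma \ref{vanishing2}(1) to the $t$-th factor with $j=a_{t-1}$, $k=k_t$ and $l=-a_t$: since $m\geqslant n+2$ and $a_t\leqslant n$ we have $m-a_t\geqslant 2\geqslant 1$, so the hypothesis $m+l\geqslant 1$ is automatic, and nonvanishing of the factor forces $i_t\leqslant a_{t-1}$. Next I would apply Lemma \ref{vanishing2}(2) to the same factor: because $i_t\geqslant 1$ always holds, nonvanishing forces the remaining hypothesis $m-a_t\geqslant n+2-a_{t-1}$ to fail, that is $a_t-a_{t-1}>m-n-2\geqslant 0$, and since the $a_t$ are integers this gives $a_t\geqslant a_{t-1}+1$ for every $t$.

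The final step is a counting argument. Summing the first family of inequalities gives $i=\sum_{t=1}^{r} i_t\leqslant \sum_{t=1}^{r} a_{t-1}=d-a_r$, while telescoping the second family gives $a_r\geqslant a_0+r$. The key observation is that at $t=1$ the inequality $i_1\leqslant a_0$ combines with the standing hypothesis $i_1\geqslant 1$ to force $a_0\geqslant 1$, hence $a_r\geqslant r+1$. Combining the two estimates yields $i\leqslant d-a_r\leqslant d-r-1$, which contradicts the assumption $i\geqslant d-r$; therefore some factor must vanish, and the tensor product is zero.

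I expect the main subtlety to be precisely this last bookkeeping step, since the hypothesis $i\geqslant d-r$ is sharp: the naive combination of $i\leqslant d-a_r$ with $a_r\geqslant r$ only yields $i\leqslant d-r$, which is not yet a contradiction. The extra positivity input $i_1\geqslant 1$, used to promote $a_0$ from $\geqslant 0$ to $\geqslant 1$, is exactly what opens up the strict gap and pushes $a_r$ up to $r+1$; without tracking it one gets stuck in the boundary case $i=d-r$. Everything else is a routine application of the already-established Lemma \ref{vanishing2}.
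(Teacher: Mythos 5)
Your proof is correct and takes essentially the same approach as the paper's: both rule out nonvanishing of each factor via Lemma \ref{vanishing2}(1) and (2) to force $i_t\leqslant a_{t-1}$ and $a_t>a_{t-1}$ for all $t$, and then derive a numerical contradiction with $i\geqslant d-r$. You even spell out a point the paper leaves implicit, namely that $i_1\geqslant 1$ together with $i_1\leqslant a_0$ forces $a_0\geqslant 1$, which is exactly why $a_r\geqslant r+1$ rather than merely $a_r\geqslant r$.
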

\begin{proof}
If $i_t>a_{t-1}$ for any $1\leqslant t\leqslant r$, then we are done by Lemma \ref{vanishing2} $(1)$. Therefore, we may assume that $i_t\leqslant a_{t-1}$ for all $1\leqslant t\leqslant r$.
Similarly, if $a_{t}\leqslant a_{t-1}$ for any $1\leqslant t\leqslant r$, then we are done by Lemma \ref{vanishing2} $(2)$. 
Therefore, we may assume that $a_{t}>a_{t-1}$ for all $1\leqslant t\leqslant r$. 
In particular, $a_r\geqslant r+1$.
Putting everything together, we have
\[
\sum_{t=1}^{r} a_{t-1}\geqslant \sum_{t=1} ^{r} i_{t}
\geqslant \sum_{t=0} ^r a_t -r
\]

Therefore, we get $a_r\leqslant r$, contradiction.
\end{proof}

\begin{lemma}\label{vanishing4bis}
Fix $r\geqslant 2$.
Let $L_m =\mathcal{O}_{X} (K_X+mA)$
for some $m\geqslant n+2$.
Let $i_t\geqslant 0$ for $1\leqslant t\leqslant r$ and $a_t \geqslant 1$ for $1\leqslant t\leqslant r-1$ be two sequences of positive integers. 
Choose two integers $0\leqslant a_0\leqslant n$ and $0\leqslant a_r\leqslant n$. 
Let $\sum_{t=1} ^r i_t = i$ and $\sum_{t=0} ^r a_t = d$.
Let $z$ be the cardinality of the set $\{t\in\{0,r\}|a_t>0\}$.
Assume that $i\geqslant d-r-z+2$.
Then
\[
 \bigotimes_{t=1} ^r H^{i_t}(X, \wedge^{a_{t-1}} M_V\otimes L_m ^{\otimes k_t}\otimes \mathcal{O}_X(-a_t A))=0
\]

for any $k_t\geqslant 1$.
\end{lemma}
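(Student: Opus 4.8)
The plan is to follow the skeleton of the proof of Lemma \ref{vanishing4}, adding the bookkeeping needed to handle the two endpoints $a_0,a_r$ and the possibly vanishing degrees $i_t$. Since a tensor product of finite-dimensional $\mathbb{C}$-vector spaces vanishes precisely when one of its factors does, I would argue by contradiction, assuming that every factor
\[
H^{i_t}\bigl(X,\wedge^{a_{t-1}}M_V\otimes L_m^{\otimes k_t}\otimes\mathcal{O}_X(-a_t A)\bigr)
\]
is nonzero and seeking a numerical contradiction with $i\geqslant d-r-z+2$. First I would record the two constraints coming from Lemma \ref{vanishing2}. Because $m\geqslant n+2$ and $a_t\leqslant n$ force $m-a_t\geqslant 2\geqslant 1$, part $(1)$ of that lemma shows that nonvanishing of the $t$-th factor requires $i_t\leqslant a_{t-1}$ (this also holds trivially when $i_t=0$); and part $(2)$, again using $m\geqslant n+2$, shows that if moreover $i_t\geqslant 1$ then $a_t\geqslant a_{t-1}+1$.

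The numerical heart of the argument is the elementary identity
\[
d-i=a_0+\sum_{t=1}^{r}(a_t-i_t),
\]
which is just a regrouping of $\sum_{t=0}^{r}a_t-\sum_{t=1}^{r}i_t$. I would then bound the summands one at a time. For an interior index $1\leqslant t\leqslant r-1$ one always has $a_t-i_t\geqslant 1$: if $i_t=0$ this is the hypothesis $a_t\geqslant 1$, and if $i_t\geqslant 1$ it follows from $a_t\geqslant a_{t-1}+1\geqslant i_t+1$. Setting $\epsilon_0=1$ if $a_0>0$ and $\epsilon_0=0$ otherwise, and likewise $\epsilon_r$, so that $z=\epsilon_0+\epsilon_r$, one has $a_0\geqslant \epsilon_0$ trivially and $a_r-i_r\geqslant \epsilon_r$; the latter is immediate when $i_r=0$, while if $i_r\geqslant 1$ the inequality $a_r\geqslant a_{r-1}+1\geqslant 1$ shows simultaneously that $\epsilon_r=1$ and $a_r-i_r\geqslant 1$. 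Summing these bounds yields $d-i\geqslant \epsilon_0+(r-1)+\epsilon_r=r-1+z$, that is $i\leqslant d-r-z+1$, contradicting the hypothesis.

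The point that requires genuine care, and the reason this statement is separated from Lemma \ref{vanishing4}, is the treatment of the two endpoints. When every $i_t\geqslant 1$, as in Lemma \ref{vanishing4}, the constraints assemble into a strictly increasing chain $a_0<a_1<\cdots<a_r$ with $a_0\geqslant 1$, and the single estimate $a_r\geqslant r+1$ closes the proof; here, with some $i_t=0$ and with $a_0$ or $a_r$ possibly zero, that chain is broken and $a_r$ need not be large. The quantity $z$ is exactly the device that absorbs this boundary slack: each of the $r-1$ interior positions still contributes a full unit to $d-i$, whereas the two endpoints contribute only $\epsilon_0$ and $\epsilon_r$. I expect the only delicate step to be the endpoint inequality $a_r-i_r\geqslant \epsilon_r$, where one must extract both $\epsilon_r=1$ and $a_r-i_r\geqslant 1$ from the single relation $a_r\geqslant a_{r-1}+1$ (here $r\geqslant 2$ ensures $a_{r-1}$ is defined); everything else is routine.
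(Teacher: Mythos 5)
Your proof is correct, but it takes a genuinely different route from the paper's. The paper argues by induction on $s$, the number of indices with $i_t=0$: the base case $s=0$ is exactly Lemma \ref{vanishing4}, and the inductive step cuts the chain at $t_0=\operatorname{min}\{t \mid i_t=0\}$, handling the initial segment by Lemma \ref{vanishing4} and the tail by the inductive hypothesis, with the key observation that the quantity $z$ can only increase after such cuts. You instead dispense with induction entirely: you extract the two constraints from Lemma \ref{vanishing2} (nonvanishing of factor $t$ forces $i_t\leqslant a_{t-1}$, and additionally $a_t\geqslant a_{t-1}+1$ when $i_t\geqslant 1$), plug them into the regrouping $d-i=a_0+\sum_{t=1}^{r}(a_t-i_t)$, and sum the per-index bounds ($1$ at interior positions, $\epsilon_0$ and $\epsilon_r$ at the endpoints) to contradict $i\geqslant d-r-z+2$ directly. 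Your argument is more self-contained and transparent --- it subsumes Lemma \ref{vanishing4} rather than citing it, and it avoids the re-indexing bookkeeping that makes the paper's cut-and-induct step terse and slightly delicate to verify --- while the paper's version has the structural advantage of reusing Lemma \ref{vanishing4} as a black box and of matching the inductive style of the surrounding lemmas. One small point you should make explicit: for interior indices the statement only assumes $a_t\geqslant 1$, not $a_t\leqslant n$, so your inequality $m-a_t\geqslant 2$ needs the remark that either $a_t\leqslant n$ may be assumed without loss of generality (since $M_V$ has rank $n$, so $\wedge^{a_t}M_V=0$ when $a_t>n$ and the conclusion is trivial), or, within your contradiction framework, that nonvanishing of factor $t+1$ already forces $a_t\leqslant n$. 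This is a one-line fix, not a gap.
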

\begin{proof}
We may assume that $a_t\leqslant n$ for all $0\leqslant t\leqslant n$.
Let $s$ be the cardinality of the set $\{1\leqslant t\leqslant r| i_t=0\}$. We proceed by induction on $s$. The case $s=0$ is settled in Lemma \ref{vanishing4}.
Assume therefore that the statement holds for $s$ and let us show it for $s+1$.  
Let $t_0=\operatorname{min}\{t|i_t=0\}$. 
If $t_0=1$ or $t_0=r$, then 
the statement follows by the inductive hypothesis after discarding the index $t_0$. Assume therefore that $1<t_0<r$.
Let $j=\sum_{t=1} ^{t_0-1} i_t$ and $e=\sum_{t=0} ^{t_0-1} a_t$.
If $j\geqslant e - t_0 + 1$, then we are done by Lemma \ref{vanishing4}.
Therefore we may assume that $i-j \geqslant (d-e) - (r-t_0) - z + 2$.
Since $z$ can only increase after cuts, we are done by inductive hypothesis. 
\end{proof}

\begin{lemma}\label{vanishing5}
Fix $r\geqslant 2$.
Let $L_m =\mathcal{O}_{X} (K_X+mA)$ for some $m\geqslant 0$.
Let $a_t \geqslant 0$ for $1\leqslant t\leqslant r-1$ be a sequence of integers and set
$\sum_{t=1} ^{r-1} a_t = d$. Let $s$ be the cardinality of the set $\{t | a_t=0\}$. 
Assume that $i\geqslant d+s-r+2$.
If $m\geqslant n+2$ and $k\geqslant 1$, then
\[
H^i \left(X(r), \bigotimes_{t=1} ^{r-1} C(X,t,t+1,r)_{a_t} \otimes L_m 
\boxtimes L_m \cdots \boxtimes L_m^{\otimes k}\right)=0
\]

If $m\geqslant n+r-1$ and $k\geqslant 1$ instead, then
\[
H^i \left(X(r), \bigotimes_{t=1} ^{r-1} C(X,1,t+1,r)_{a_t} \otimes 
L_m ^{\otimes k}\boxtimes L_m \cdots
\boxtimes L_m\right)=0
\]
\end{lemma}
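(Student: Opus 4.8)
The plan is to reduce both vanishing statements, through the Künneth formula, to the single-factor vanishing already in hand: the second display will come from Lemma \ref{vanishing3} and the first from Lemma \ref{vanishing4bis}. The only thing preventing a direct application of Künneth is that whenever $a_t=0$ the graded piece $C(X,t,t+1,r)_0$ (respectively $C(X,1,t+1,r)_0$) is not an external product of sheaves on the factors, but the enlarged diagonal ideal $\mathcal{I}_{\Gamma_{X(r)}^{t,t+1}}$ (respectively $\mathcal{I}_{\Gamma_{X(r)}^{1,t+1}}$). The integer $s$ counts exactly these ideal factors, and I would argue by induction on $s$.

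In the base case $s=0$ every factor is locally free, since $C(X,t,t+1,r)_{a_t}=\operatorname{pr}_t^*\mathcal{O}_X(-a_tA)\otimes\operatorname{pr}_{t+1}^*\wedge^{a_t}M_V$ for $a_t\geqslant 1$, and similarly for the other complex. After forming the tensor product over $t$ and twisting by the external product of the $L_m$, the first case contributes on the $j$-th factor a sheaf of the form $\wedge^{a_{j-1}}M_V\otimes L_m^{\otimes k_j}\otimes\mathcal{O}_X(-a_jA)$ with the two boundary exponents equal to zero, while the second case collects all the negative twists on the first factor, giving $\mathcal{O}_X(-dA)\otimes L_m^{\otimes k}$ there and $\wedge^{a_{t-1}}M_V\otimes L_m$ on the remaining factors. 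The Künneth formula then writes $H^i$ as a sum, over all $(i_1,\dots,i_r)$ with $\sum_t i_t=i$, of precisely the tensor products killed by Lemma \ref{vanishing4bis} (with $z=0$) and by Lemma \ref{vanishing3}; their common hypothesis $i\geqslant d-r+2$ is the standing hypothesis $i\geqslant d+s-r+2$ when $s=0$.

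For the inductive step I would fix an index $t_0$ with $a_{t_0}=0$ and resolve the corresponding ideal factor by the pullback of Beilinson's resolution $(2)$,
\[
0\rightarrow\operatorname{pr}_{t_0}^*\mathcal{O}_X(-nA)\otimes\operatorname{pr}_{t_0+1}^*\wedge^nM_V\rightarrow\cdots\rightarrow\operatorname{pr}_{t_0}^*\mathcal{O}_X(-A)\otimes\operatorname{pr}_{t_0+1}^*M_V\rightarrow\mathcal{I}_{\Gamma_{X(r)}^{t_0,t_0+1}}\rightarrow 0,
\]
and tensor it with all the remaining factors. This tensor product stays exact: the locally free factors are flat, and the Tor-independence of $\mathcal{I}_{\Gamma_{X(r)}^{t_0,t_0+1}}$ against the product of the other enlarged diagonal ideals follows from Corollary \ref{globalpicture}, after a change of coordinates, pulled back along the finite flat morphism $X(r)\rightarrow\mathbb{P}^n(r)$ induced by $\alpha$. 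Thus the sheaf $\mathcal{F}$ whose cohomology is sought is resolved by a complex whose term in homological degree $-(c-1)$ is obtained by replacing $C(X,t_0,t_0+1,r)_0$ with $C(X,t_0,t_0+1,r)_c$, for $1\leqslant c\leqslant n$; the hypercohomology spectral sequence then reduces the vanishing of $H^i(\mathcal{F})$ to that of $H^{i+c-1}$ of each of these replacements. Each replacement has one fewer zero exponent and total degree $d+c$, so the inductive hypothesis demands $i+c-1\geqslant(d+c)+(s-1)-r+2$, which is exactly $i\geqslant d+s-r+2$.

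The main obstacle is the inductive step: one must simultaneously justify that resolving a single ideal factor keeps the tensor product with all the remaining factors exact, which is where the commutative algebra of Section \ref{algebra}, the Tor-vanishing of Corollary \ref{globalpicture}, and the flatness of the map induced by $\alpha$ all enter, and then track the homological shift $i\mapsto i+c-1$ through the spectral sequence so that the counter $s$ surfaces precisely as the correction $+s$ to the bound. Once this is in place the argument closes, the base case being a pure application of the Künneth formula together with Lemmas \ref{vanishing3} and \ref{vanishing4bis}.
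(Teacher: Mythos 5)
Your proof is correct and follows essentially the same route as the paper's: induction on $s$, with the base case handled by K\"unneth together with Lemmas \ref{vanishing3} and \ref{vanishing4bis}, and the inductive step done by resolving the ideal factor at an index $t_0$ with $a_{t_0}=0$ and dimension-shifting ($i\mapsto i+c-1$), so that the inductive hypothesis applies with $s-1$ zeros and total degree $d+c$. The only cosmetic difference is that the paper invokes Lemma \ref{exact} for the exactness of the tensored complex, whereas you re-derive it from the Tor-vanishing of Corollary \ref{globalpicture} pulled back along the flat morphism $X(r)\rightarrow\mathbb{P}^n(r)$ --- the same underlying facts from Section \ref{algebra}.
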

\begin{proof}
We proceed by induction on $s$. The case $s=0$ follows immediately from Lemma \ref{vanishing3}, Lemma \ref{vanishing4bis} and K\"unneth's formula. Suppose therefore that the statement holds for $s$, and let us show it for $s+1$. Let $t_0$ be an index such that $a_{t_0}=0$. By Lemma \ref{exact}, it is then enough to show that
\[
H^{i+j'}\left(X(r), C(X,t_0,t_0+1, r)_j \otimes
\bigotimes_{t\neq t_0} C(X,t,t+1,r)_{a_t} \otimes L_m 
\boxtimes
 \cdots \boxtimes L_m^{\otimes k}\right)=0
\]

and

\[
H^{i+j'}\left(X(r), C(X,1,t_0+1, r)_j\otimes \bigotimes_{t\neq t_0} C(X,1,t+1,r)_{a_t} \otimes L_m ^{\otimes k}\boxtimes\cdots
\boxtimes L_m\right)=0
\]

for any $j\geqslant 1$ and $j'\geqslant j-1$.
This holds by inductive hypothesis, and we are done. 
\end{proof}

\begin{theorem}\label{mainvanishing}
Fix $r\geqslant 2$.
Let $L_m =\mathcal{O}_{X} (K_X+mA)$
for some $m\geqslant 0$. 
If $m\geqslant n+2$, then
\[
H^1(X(r), \mathcal{I}_{\Gamma_{X(r)}}^2(r)\otimes L_m \boxtimes L_m\boxtimes \cdots
\boxtimes L_m ^{\otimes k})=0
\]

for any $k\geqslant 1$. If $m\geqslant n+r-1$ instead, then
\[
H^1(X(r), \mathcal{I}_{\Gamma_{X(r)}}^1(r)\otimes L_m ^{\otimes k}\boxtimes L_m\boxtimes \cdots
\boxtimes L_m)=0
\]

for any $k\geqslant 1$.
\end{theorem}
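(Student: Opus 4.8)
The plan is to resolve the enlarged-diagonal ideal sheaves by the exact complexes $C^1(X,s,r)$ and $C^2(X,s,r)$ constructed above, and then reduce the desired $H^1$-vanishing to the cohomological estimates of Lemma \ref{vanishing5}. For the first assertion I would work with the complex $C^2(X,r,r)$, which by Lemma \ref{exact} is exact and has $\mathcal{I}_{\Gamma_{X(r)}}^2(r)$ as its degree-zero term, while its degree-$b$ term is the direct sum, over all tuples $(a_1,\dots,a_{r-1})$ with $a_t\in\{0,\dots,n\}$ and $\sum_t a_t=b$, of the sheaves $\bigotimes_{t=1}^{r-1} C(X,t,t+1,r)_{a_t}$. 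Tensoring the whole complex with the line bundle $L_m\boxtimes\cdots\boxtimes L_m^{\otimes k}$ preserves exactness (the line bundle is locally free), so I obtain an exact complex $K_\bullet$ with $K_0=\mathcal{I}_{\Gamma_{X(r)}}^2(r)\otimes(L_m\boxtimes\cdots\boxtimes L_m^{\otimes k})$.

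Since $K_\bullet$ is acyclic, its hypercohomology vanishes; splitting it into the short exact sequences of its syzygies and chasing the resulting long exact cohomology sequences (equivalently, running the hypercohomology spectral sequence $E_1^{-b,q}=H^q(K_b)$, which converges to $0$ and has no differentials leaving the $p=0$ column) shows that $H^1(K_0)=0$ as soon as $H^b(K_b)=0$ for every $b\geq 1$. I would then verify this term by term: fix $b\geq 1$ and a summand indexed by $(a_1,\dots,a_{r-1})$ with $\sum_t a_t=b=:d$, and set $s=\#\{t:a_t=0\}$. Lemma \ref{vanishing5} yields the vanishing of its $i$-th cohomology whenever $i\geq d+s-r+2$; taking $i=b=d$, this inequality reduces to $s\leq r-2$, which holds because $b\geq 1$ forces at least one $a_t\geq 1$ among the $r-1$ indices, so that $s\leq (r-1)-1=r-2$. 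Hence $H^b(K_b)=0$ for all $b\geq 1$ and the first vanishing follows, under the hypothesis $m\geq n+2$ demanded by the first half of Lemma \ref{vanishing5}.

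The second assertion is entirely parallel. I would replace $C^2(X,r,r)$ by $C^1(X,r,r)$, whose degree-zero term is $\mathcal{I}_{\Gamma_{X(r)}}^1(r)$ and whose degree-$b$ term is assembled from the factors $C(X,1,t+1,r)_{a_t}$, and I would tensor with $L_m^{\otimes k}\boxtimes L_m\boxtimes\cdots\boxtimes L_m$, the twisted factor now sitting in the shared first coordinate. The same hypercohomology reduction brings matters to $H^b(K_b)=0$ for $b\geq1$, which this time follows from the second half of Lemma \ref{vanishing5}; the numerical condition $i\geq d+s-r+2$ is identical, so again $s\leq r-2$ suffices, but the relevant estimate now requires the stronger positivity $m\geq n+r-1$, reflecting the fact that in the type-$1$ configuration all the negative twists $\mathcal{O}_X(-a_tA)$ accumulate in the single first coordinate.

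The step I expect to be most delicate is the cohomological bookkeeping in the reduction: one must line up the precise cohomological degree $H^b$ of the degree-$b$ term against the hypothesis of Lemma \ref{vanishing5}, and the argument is tight, working exactly because a nonzero total degree $b$ guarantees $s\leq r-2$, which is precisely what turns $d+s-r+2$ into a quantity $\leq b$. A secondary point to confirm is that the degree-zero term of the tensor-product complex is genuinely the ideal $\mathcal{I}_{\Gamma_{X(r)}}^{j}(r)$ and that the complex stays exact after pullback to $X(r)$ and after tensoring by the external product of line bundles; the identification of the product of diagonal ideals with their intersection is recorded in Corollary \ref{globalpicture}, and the exactness of the pulled-back complexes is exactly the content of Lemma \ref{exact}.
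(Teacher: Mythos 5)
Your proposal is correct and follows essentially the same route as the paper: the paper's proof is precisely to apply Lemma \ref{vanishing5} to the (total) complexes $C^1(X,r,r)$ and $C^2(X,r,r)$, and you have simply made explicit the standard details it leaves implicit — tensoring the exact complex by the external product of line bundles, the hypercohomology (or syzygy-chasing) reduction of $H^1$ of the degree-zero term to the vanishing $H^b(K_b)=0$ for $b\geqslant 1$, and the numerical check that $b\geqslant 1$ forces $s\leqslant r-2$ so that the hypothesis $i\geqslant d+s-r+2$ of Lemma \ref{vanishing5} is met. The matching of the two halves of Lemma \ref{vanishing5} with the two assertions (and with the thresholds $m\geqslant n+2$ and $m\geqslant n+r-1$) is also exactly as in the paper.
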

\begin{proof}
Immediate consequence of Lemma \ref{vanishing5} applied to the complexes $C^1(X,r,r)$ and $C^2(X,r,r)$.
\end{proof}

\subsection{Projective normality.}\label{subsectionnormality}
Here we prove:

\begin{theorem}\label{projectivenormality}
Let $X$ be a smooth complex projective variety of dimension $n$ and let $A$ be an ample and basepoint free divisor.
Then the line bundle
$L_m =\mathcal{O}_X(K_X+mA)$ satisfies property $N_0$ for any $m\geqslant n+1$. 
\end{theorem}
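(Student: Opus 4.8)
The plan is to run the diagonal criterion for $p=0$ and then transfer the enlarged–diagonal vanishing that is already in hand. By Theorem~\ref{greendiagonal} applied with $E=\mathcal{O}_X$ and $p=0$, only the case $r=2$ occurs, so property $N_0$ for $L_m$ follows once we establish
\[
H^1\bigl(X\times X,\ \mathcal{I}_{\Delta_X}\otimes L_m^{\otimes k}\boxtimes L_m\bigr)=0\qquad\text{for all }k\geqslant 1 .
\]
Since $m\geqslant n+1$ is precisely the bound $m\geqslant n+r-1$ at $r=2$, Theorem~\ref{mainvanishing} already gives the same vanishing with the true diagonal replaced by the enlarged diagonal $\mathcal{I}_{\Gamma_X}$, where $\Gamma_X=X\times_{\mathbb{P}^n}X$. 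Thus the entire content is to bridge the gap between $\Gamma_X$ and $\Delta_X$.

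To organize this I would write $\mathcal{L}=L_m^{\otimes k}\boxtimes L_m$ and set $\mathcal{Q}=\mathcal{I}_{\Delta_X}/\mathcal{I}_{\Gamma_X}$, the ideal of $\Delta_X$ inside $\Gamma_X$. The inclusion $\Delta_X\subseteq\Gamma_X$ gives $0\to\mathcal{I}_{\Gamma_X}\to\mathcal{I}_{\Delta_X}\to\mathcal{Q}\to 0$ and $0\to\mathcal{Q}\to\mathcal{O}_{\Gamma_X}\to\mathcal{O}_{\Delta_X}\to 0$. From the first sequence and the $\Gamma_X$–vanishing, the target group $H^1(\mathcal{I}_{\Delta_X}\otimes\mathcal{L})$ injects into $H^1(\mathcal{Q}\otimes\mathcal{L})$; from the second, once one knows $H^1(\mathcal{O}_{\Gamma_X}\otimes\mathcal{L})=0$, the group $H^1(\mathcal{Q}\otimes\mathcal{L})$ is the cokernel of the restriction map
\[
\operatorname{res}\colon H^0\bigl(\Gamma_X,\mathcal{L}|_{\Gamma_X}\bigr)\longrightarrow H^0\bigl(\Delta_X,\mathcal{L}|_{\Delta_X}\bigr)=H^0\bigl(X,L_m^{\otimes(k+1)}\bigr).
\]
The two auxiliary vanishings $H^1(\mathcal{O}_{X\times X}\otimes\mathcal{L})=0$ and $H^1(\mathcal{O}_{\Gamma_X}\otimes\mathcal{L})=0$ I regard as routine: the first is Künneth together with Lemma~\ref{vanishing1}, and the second follows by resolving $\mathcal{O}_{\Gamma_X}$ with the pulled–back Beilinson resolution $(2)$ and invoking Lemma~\ref{vanishing5} (equivalently, from the fact that $\alpha_*L_m^{\otimes k}\otimes\alpha_*L_m$ is $0$–regular on $\mathbb{P}^n$). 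So everything reduces to the surjectivity of $\operatorname{res}$.

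The hard part is exactly this surjectivity, and it is where the hypothesis $m\geqslant n+1$ and the finite–morphism duality of Subsection~\ref{duality} must enter. Pushing forward along the finite flat projection $p_1\colon\Gamma_X\to X$ identifies $\operatorname{res}$ with the map on sections induced by the counit $\alpha^*\alpha_*L_m\to L_m$, and a computation of the relative dualizing sheaf gives the clean formula $\omega_{\Gamma_X/X}\cong p_2^*\,\mathcal{O}_X(K_X+(n+1)A)=p_2^*\omega_{X/\mathbb{P}^n}$. Because $m\geqslant n+1$ we may factor $L_m=\omega_{X/\mathbb{P}^n}\otimes\alpha^*\mathcal{O}(m-n-1)$, so the trace map for the cover $\alpha$, which by Subsection~\ref{duality} splits $\mathcal{O}_{\mathbb{P}^n}$ off of $\alpha_*\omega_{X/\mathbb{P}^n}$, can be combined with the Kodaira–type Lemmas~\ref{vanishing1}–\ref{vanishing5} to isolate the diagonal contribution and force $\operatorname{res}$ to be surjective. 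I expect this duality step to be the genuine obstacle: it is the precise point at which any positivity of $A$ alone is insufficient and the relative–canonical twist carried by $L_m$ becomes indispensable. It is also exactly where the very–ample, multiplier–ideal argument breaks down, since $\Gamma_X$ is strictly larger than $\Delta_X$ and the two components meet over the ramification of $\alpha$, so the comparison cannot be made scheme–theoretically and must instead be made cohomologically through the trace.
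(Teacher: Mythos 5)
Your reduction framework is correct and, up to packaging, is the same as the paper's: Theorem \ref{mainvanishing} at $r=2$ handles the enlarged diagonal, and everything hinges on the surjectivity of $\operatorname{res}\colon H^0(\Gamma_X,\mathcal{L}|_{\Gamma_X})\to H^0(\Delta_X,\mathcal{L}|_{\Delta_X})$, where $\mathcal{L}=L_m^{\otimes k}\boxtimes L_m$; this surjectivity is exactly the paper's Lemma \ref{finiterestriction}. (Your auxiliary step $H^1(\mathcal{O}_{\Gamma_X}\otimes\mathcal{L})=0$ is indeed routine: K\"unneth and Lemma \ref{vanishing1} give $H^1(X\times X,\mathcal{L})=0$, and Lemma \ref{vanishing5} with $r=2$, $a_1=0$, $s=1$ gives $H^i(\mathcal{I}_{\Gamma_X}\otimes\mathcal{L})=0$ for \emph{all} $i\geqslant 1$ once $m\geqslant n+1$; the paper in fact avoids this step entirely by proving surjectivity of the composite $H^0(X\times X,\mathcal{L})\to H^0(\Delta_X,\mathcal{L}|_{\Delta_X})$ instead of the vanishing $H^1(\mathcal{I}_{\Delta_X}\otimes\mathcal{L})=0$.) The genuine gap is your mechanism for the surjectivity of $\operatorname{res}$. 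The inclusion $\mathcal{O}_{\mathbb{P}^n}\hookrightarrow\alpha_*\omega_{X/\mathbb{P}^n}\cong(\alpha_*\mathcal{O}_X)^*$ furnished by Subsection \ref{duality} sends $1$ to the trace map, so its composition with the counit $\alpha^*\alpha_*L_m\to L_m$ is multiplication by the trace regarded as a global section of $\omega_{X/\mathbb{P}^n}$, and that section vanishes precisely along the ramification of $\alpha$. One sees this already for a double cover $z^2=f$: in the basis $1,z$ one has $\alpha_*\omega_{X/\mathbb{P}^n}=\mathcal{O}\cdot 1^*\oplus M\cdot z^*$ with $\omega_{X/\mathbb{P}^n}$ generated by $z^*$ and $1^*=z\cdot z^*$, so the trace equals $2\cdot 1^*=2z\cdot z^*$, vanishing on $\{z=0\}$. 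Consequently the sections of $L_m^{\otimes(k+1)}$ you can reach through the trace summand all vanish on the ramification divisor, and no appeal to Lemmas \ref{vanishing1}--\ref{vanishing5} can repair this: the obstruction is divisorial, not cohomological. (Using the full sheaf $\alpha^*\alpha_*L_m$ instead of the trace summand is circular, since the relevant $H^1$ of the kernel of the counit is precisely the group you are trying to kill.)

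The idea your proposal is missing -- and the actual content of the paper's proof of Lemma \ref{finiterestriction} -- is to leave $X$ and pass to the Galois closure $\beta\colon Y\to X$ of $\alpha$, with $G=\operatorname{Gal}(Y/\mathbb{P}^n)$ and $H=\operatorname{Gal}(Y/X)$. Over $Y$ the enlarged diagonal decomposes into graphs of Galois elements, $\Gamma_Y=\bigcup_{g\in G}g^*\Delta_Y$, and a section on $\Delta_{Y_0}$ can be propagated $G$-equivariantly to all graphs; the propagated data glue along the pairwise intersections of the graphs (which lie over the ramification) because the ramification formula gives a canonical isomorphism $\operatorname{pr}_1^*(\omega_Y\otimes\beta^*\mathcal{O}_X(mA))|_{\Gamma_{Y_0}}\cong\operatorname{pr}_2^*(\omega_Y\otimes\beta^*\mathcal{O}_X(mA))|_{\Gamma_{Y_0}}$ -- this, rather than the trace splitting for $\alpha$, is where the relative-canonical structure of $L_m$ is genuinely used. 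One then descends from $Y$ to $X$ by the trace of $\beta$ (not of $\alpha$), via the commutative diagram in Subsection \ref{subsectionnormality}, and finally extends sections from the flat locus $\Gamma_{X_0}$ to all of $\Gamma_X$ using that $\Gamma_X$ is reduced and Cohen--Macaulay. Since your argument never leaves $X$, where $\alpha$ is in general not Galois and no group action is available to spread a diagonal section over $\Gamma_X$, the central step of the proof is absent, and the proposal as written cannot be completed.
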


We fix notation at in Subsection \ref{setup}. First, notice that $\Gamma_X$ is reduced since $\alpha$
is flat, $\mathbb{P}^n$ is reduced and $\Gamma_X$ is generically reduced. Furthermore, $\Gamma_X$ is Cohen-Macaulay since $\alpha$ is flat and $\mathbb{P}^n$ is smooth.
Let $\omega_Y$ be the dualizing sheaf of $Y$. 
Let $Y_0$ be the smooth locus of $Y$. Notice that $Y\setminus Y_0$ has codimension at least two since $Y$ is normal; 
in particular
$\omega_Y \cong i_* \omega_{Y_0}$. Up to shrinking $Y_0$
while keeping the codimension at least two,
we may assume that $X_0=\beta(Y_0)$ is open and 
$\beta:Y_0 \rightarrow X_0$ is finite and flat.
Let $\Gamma_{Y_0}=Y_0 \times_{\mathbb{P}_n} Y_0$.
Consider
\[
F=(\omega_Y \otimes \beta^* \mathcal{O}_X (mA))
\boxtimes
(\omega_Y \otimes \beta^* \mathcal{O}_X (mA))
\]

There is an injective map
\[
F|_{\Gamma_{Y_0}}
\rightarrow
\bigoplus_{g\in G} F|_{g^* \Delta_{Y_0}}
\]

where we let $G$ act on the second entry. By taking
invariants we get an isomorphism
\[
\left(F|_{\Gamma_{Y_0}}\right)^G
\cong
\left(\bigoplus_{g\in G} F|_{g^* \Delta_{Y_0}}\right)^G
\]

Therefore we have a surjective map
\[
H^0(\Gamma_{Y_0}, F|_{\Gamma_{Y_0}})
\rightarrow
H^0(\Delta_{Y_0} , F|_{\Delta_{Y_0}})
\]

Equivalently, this also follows by noticing that the ramification formula applied to the morphism $Y_0\rightarrow \delta(Y_0)$ gives rise to a canonical isomorphism

\[
\operatorname{pr}_1 ^* (\omega_Y \otimes \beta^* \mathcal{O}_{X}(mA)|_{\Gamma_{Y_0}}
\cong 
\operatorname{pr}_2 ^* (\omega_Y \otimes \beta^* \mathcal{O}_{X}(mA)|_{\Gamma_{Y_0}}
\]

\textbf{The Galois case.} If $X$ is Galois over $\mathbb{P}^n$, then
$Y=X$. Therefore Lemma \ref{finiterestriction} below follows immediately.

For the general case, consider the trace map 

\[
\operatorname{Tr}: (\beta\times \beta)_* F
\rightarrow L_m  \boxtimes L_m
\]

We have the following diagram, which commutes up to multiplication by $|H|$ on the sections pulled back 
from $H^0(\Delta_{Y_0}, F|_{\Delta_{Y_0}})$ 
to $H^0(\Gamma_{Y_0}, F|_{\Gamma_{Y_0}})$ via the Galois action.

\begin{tikzcd}
    H^0(\Gamma_{X_0}, (\beta\times\beta)_* F|_{\Gamma_{X_0}})\arrow{r}{\operatorname{Tr}|_{\Gamma_{X_0}}}\arrow{d}{r} & H^0(\Gamma_{X_0}, L_m\boxtimes L_m\arrow{d}{r})\\
    H^0(\Delta_{X_0}, (\beta\times\beta)_* F|_{\Delta_{X_0}}) 
    \arrow{r}{\operatorname{Tr}|_{\Delta_{X_0}}}& 
    H^0(\Delta_{X_0}, L_m \boxtimes L_m)
\end{tikzcd}

The left vertical map and the trace maps are both surjective. 
Therefore the right vertical map also is surjective. 
Since $\Delta_X$ and 
$\Gamma_X$ are both reduced and Cohen-Macaulay,
we have proved:

\begin{lemma}\label{finiterestriction}
The restriction map
\[
H^0(\Gamma_X, L_m ^{\otimes k} \boxtimes L_m)\rightarrow H^0(\Delta_X, L_m ^{\otimes k} \boxtimes L_m)
\]

is surjective. 
\end{lemma}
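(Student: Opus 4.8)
\emph{Proof plan.} The plan is to deduce the surjectivity by passing to the Galois closure $\delta:Y\to\mathbb{P}^n$, where the enlarged diagonal acquires a transparent group-theoretic description, and then descending back to $X$ by means of the trace maps of Subsection \ref{duality}. The whole difficulty is that $\Gamma_X=X\times_{\mathbb{P}^n}X$ is reducible and singular, with $\Delta_X$ merely one of its components, so one cannot lift sections directly; the decisive gain upstairs is that over the smooth locus $\Gamma_{Y_0}$ is exactly the union of the graphs $\{(y,gy)\}$ of the elements $g\in G$ acting on the second factor.

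First I would reduce to the smooth locus. Since $\alpha$ is flat and $\mathbb{P}^n$ is smooth, $\Gamma_X$ and $\Delta_X$ are reduced and Cohen--Macaulay, and the complements $\Gamma_X\setminus\Gamma_{X_0}$ and $\Delta_X\setminus\Delta_{X_0}$ have codimension at least two; as the relevant sheaves satisfy $S_2$, restriction is an isomorphism on global sections, so it suffices to treat $\Gamma_{X_0}\to\Delta_{X_0}$. Upstairs I set $E=\omega_Y\otimes\beta^*\mathcal{O}_X(mA)$ and observe that $\beta^*\mathcal{O}_X(mA)=\delta^*\mathcal{O}_{\mathbb{P}^n}(m)$ together with $g^*\omega_Y\cong\omega_Y$ for $g\in G$ (ramification formula) make $E$ canonically $G$-linearized, equivalently $\operatorname{pr}_1^*E\cong\operatorname{pr}_2^*E$ on $\Gamma_{Y_0}$. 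I then take $F=\operatorname{pr}_1^*F_1\otimes\operatorname{pr}_2^*E$ with $F_1=\beta^*L_m^{\otimes(k-1)}\otimes E$, so that the traces $\beta_*F_1\to L_m^{\otimes k}$ and $\beta_*E\to L_m$ are split surjective.

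The core step is the isomorphism on invariants. Restricting $F$ to a component $\{(y,gy)\}$ of $\Gamma_{Y_0}$ identifies it with $F_1\otimes E$ independently of $g$ — only the $G$-linearization of the second factor enters, while $F_1$ may be arbitrary since $\operatorname{pr}_1$ is $G$-invariant — so the injection $F|_{\Gamma_{Y_0}}\hookrightarrow\bigoplus_{g\in G}F|_{g^*\Delta_{Y_0}}$ becomes, after taking $G$-invariants, an isomorphism onto the diagonal summand, yielding surjectivity of $H^0(\Gamma_{Y_0},F)\to H^0(\Delta_{Y_0},F)$. Pushing forward by $\beta\times\beta$ and applying $\operatorname{Tr}$ produces the square comparing the restriction $\Gamma_{X_0}\to\Delta_{X_0}$ for $(\beta\times\beta)_*F$ with the one for $L_m^{\otimes k}\boxtimes L_m$; it commutes up to the factor $|H|$, and since the left vertical map and both traces are surjective, a diagram chase gives surjectivity of the right vertical map on $X_0$, which the codimension-two reduction then promotes to all of $\Gamma_X$.

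The main obstacle, and the reason for routing everything through $Y$, is the reducible singular geometry of $\Gamma_X$: the clean decomposition into graphs exists only after the Galois base change. The two genuinely delicate points are that restriction and trace commute merely up to the scalar $|H|$, which is harmless in characteristic zero where $|H|$ is invertible, and that the final passage from $X_0$ to $X$ hinges essentially on the Cohen--Macaulayness of $\Gamma_X$ and $\Delta_X$ in order to extend sections across the codimension-two locus.
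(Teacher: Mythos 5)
Your proposal is correct and takes essentially the same route as the paper's own proof: reduction to the smooth locus via reducedness and Cohen--Macaulayness of $\Gamma_X$ and $\Delta_X$, the identification of $\Gamma_{Y_0}$ with the union of the $G$-graphs together with the $G$-invariants (equivalently, $G$-linearization of $\omega_Y\otimes\beta^*\mathcal{O}_X(mA)$) argument to get surjectivity upstairs, and descent by the trace maps with the diagram commuting up to the harmless factor $|H|$. The only difference is a welcome refinement: you twist the first factor by $\beta^*L_m^{\otimes(k-1)}$ so that the case of general $k$ is handled explicitly, whereas the paper writes out only the case $k=1$ and leaves that twist implicit.
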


Theorem \ref{projectivenormality} then follows by combining
Lemma \ref{finiterestriction} and Theorem \ref{mainvanishing}.

\subsection{Property $N_p$ and Koszul.} 
Here we prove:
\begin{theorem}\label{maintheorem1}
Let $X$ be a smooth complex projective variety of dimension $n$ and let $A$ be an ample and basepoint free divisor.
Then the line bundle
$L_m =\mathcal{O}_X(K_X+mA)$ satisfies property $N_p$ for any $m\geqslant n+1+p$. 
\end{theorem}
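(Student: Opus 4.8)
The plan is to run, on the full $r$-fold product $X(r)$ and for the entire diagonal configuration $\Delta_{X(r)}^1(r)$, the same strategy by which projective normality was established in Subsection \ref{subsectionnormality}. By Theorem \ref{greendiagonal} it suffices to prove
\[
H^1\left(X(r), \mathcal{I}_{\Delta_{X(r)}}^1(r)\otimes L_m^{\otimes k}\boxtimes L_m\boxtimes\cdots\boxtimes L_m\right)=0
\]
for all $2\leqslant r\leqslant p+2$ and all $k\geqslant 1$. Since $m\geqslant n+1+p$ and $r\leqslant p+2$ force $m\geqslant n+r-1$, Theorem \ref{mainvanishing} already yields the analogous vanishing for the enlarged configuration $\Gamma_{X(r)}^1(r)$. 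The whole problem is thus to transfer vanishing from the enlarged diagonals $\Gamma$ to the honest diagonals $\Delta$, exactly as Lemma \ref{finiterestriction} did in the case $r=2$.

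Write $P=L_m^{\otimes k}\boxtimes L_m\boxtimes\cdots\boxtimes L_m$. First I would record two reductions. From $0\to\mathcal{I}_{\Gamma_{X(r)}}^1(r)\to\mathcal{O}_{X(r)}\to\mathcal{O}_{\Gamma_{X(r)}^1(r)}\to 0$ and Theorem \ref{mainvanishing}, the restriction $H^0(X(r),P)\to H^0(\Gamma_{X(r)}^1(r),P)$ is surjective. On the other hand, Künneth together with Kodaira vanishing (Lemma \ref{vanishing1}) gives $H^1(X(r),P)=0$, so from $0\to\mathcal{I}_{\Delta_{X(r)}}^1(r)\to\mathcal{O}_{X(r)}\to\mathcal{O}_{\Delta_{X(r)}^1(r)}\to 0$ the desired vanishing $H^1(\mathcal{I}_{\Delta_{X(r)}}^1(r)\otimes P)=0$ is equivalent to surjectivity of $H^0(X(r),P)\to H^0(\Delta_{X(r)}^1(r),P)$. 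Since $\Delta_{X(r)}^1(r)\subseteq\Gamma_{X(r)}^1(r)$, this last restriction factors through $H^0(\Gamma_{X(r)}^1(r),P)$, so everything reduces to the single statement that the restriction
\[
H^0\left(\Gamma_{X(r)}^1(r),P\right)\to H^0\left(\Delta_{X(r)}^1(r),P\right)
\]
is surjective --- the higher analogue of Lemma \ref{finiterestriction}.

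To prove this restriction surjectivity I would generalize the duality argument of Subsection \ref{subsectionnormality}. Pulling back along the Galois cover $\delta:Y\to\mathbb{P}^n$ of Subsection \ref{setup}, over the open locus where $\delta$ is étale the enlarged scheme $\Gamma_{Y(r)}^1(r)$ breaks up into graph components: each $\Gamma_{Y(r)}^{1,i}$ splits as $\bigsqcup_{g\in G}\{y_1=g\cdot y_i\}$, and the honest diagonal $\Delta_{Y(r)}^1(r)$ is precisely the sub-union obtained by selecting the identity element in every index. Restricting a suitable external product $F$ of twists of $\omega_Y\otimes\beta^*\mathcal{O}_X(mA)$ to $\Gamma_{Y(r)}^1(r)$, using the $G$-action to single out the identity components, and descending through the trace map and the canonical splitting $\mathcal{O}_X\hookrightarrow\beta_*\omega_{Y/X}$ of Subsection \ref{duality}, one recovers the surjectivity on $X(r)$.

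The main obstacle lies in the fact that $\Gamma_{X(r)}^1(r)$ and $\Delta_{X(r)}^1(r)$ are now \emph{reducible}. Before the duality argument can even be phrased one must know that these union schemes are reduced and Cohen--Macaulay and that the scheme-theoretic identifications $\mathcal{I}_{\Gamma_{X(r)}}^1(r)=\bigcap_i\mathcal{I}_{\Gamma_{X(r)}^{1,i}}$, and likewise for $\Delta$, hold --- so that no embedded or non-reduced structure appears along the loci where several diagonals cross. This is exactly the content secured by Corollary \ref{globalpicture} together with the flatness of $\alpha$, and it is the technical heart of the matter. The second delicate point is to check that the étale-local decomposition of $\Gamma_{Y(r)}^1(r)$ into Galois graphs is compatible with the $G$-action used to isolate the diagonal, so that taking invariants and applying the trace genuinely lands on $\Delta_{X(r)}^1(r)$ and not on some strictly larger closed subscheme. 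Once reducedness, Cohen--Macaulayness and this compatibility are in place, the argument closes as in Theorem \ref{projectivenormality}.
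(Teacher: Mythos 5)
Your opening reduction is correct and is exactly the paper's: by Theorem \ref{greendiagonal}, Theorem \ref{mainvanishing}, and Kodaira vanishing on $X(r)$, the whole theorem comes down to surjectivity of the restriction map $H^0(\Gamma_{X(r)}^1(r),F)\rightarrow H^0(\Delta_{X(r)}^1(r),F)$. The gap is in the step where you prove this surjectivity by a one-shot generalization of the $r=2$ Galois/trace argument, and this is not a technical compatibility check that will work out: the naive spreading argument is genuinely false for $r\geqslant 3$. In the case $r=2$ the graphs $\{y_1=g\cdot y_2\}$, $g\in G$, are pairwise \emph{disjoint} over the locus where $G$ acts freely, so a section on $\Delta_{Y_0}$ spreads equivariantly to all components and trivially glues to a section on $\Gamma_{Y_0}$; compatibility never arises. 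For $r\geqslant 3$, while the components of a single $\Gamma_{Y(r)}^{1,i}$ are still disjoint over the free locus, the components $\{y_1=g\cdot y_i\}$ and $\{y_1=h\cdot y_j\}$ with $i\neq j$ intersect in codimension $2n$ even there. At a point of such an intersection, the spread of $u|_{\Delta_{Y(r)}^{1,i}}$ (via $G$ acting on the $i$-th factor) takes the value of $u$ at one point of $\Delta_{Y(r)}^1(r)$, while the spread of $u|_{\Delta_{Y(r)}^{1,j}}$ (via $G$ acting on the $j$-th factor) takes the value of $u$ at a \emph{different} point, and the compatibility of $u$ on $\Delta_{Y(r)}^{1,i}\cap\Delta_{Y(r)}^{1,j}$ imposes no relation between these two values. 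So the invariant tuple you construct is not compatible on overlaps and does not define a section of $F$ on $\Gamma_{Y(r)}^1(r)$; taking invariants and applying the trace cannot repair this, since the defect occurs before descent.

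This failure is precisely why the paper does not argue in one shot. Its proof of the surjectivity is a double induction (on the number of factors, and descending on $s$) in which only one enlarged diagonal is handled at a time: given a section $u$ on $\Gamma_{X(r)}^1(s)\cup\Delta_{X(r)}^{1,s+1}\cup\cdots\cup\Delta_{X(r)}^{1,r}$, one first uses Theorem \ref{mainvanishing} together with the inductive hypothesis to subtract a section coming from $\Gamma_{X(r)}^1(r)$ and thereby arrange that $u$ vanishes identically on $\Gamma_{X(r)}^1(s)\cup\Delta_{X(r)}^{1,s+2}\cup\cdots\cup\Delta_{X(r)}^{1,r}$. Only then is the Galois spreading applied, and only to $u|_{\Delta_{X(r)}^{1,s+1}}$: because this restriction vanishes on its intersection with the rest of the configuration, and because that configuration is invariant under the $G$-action on the $(s+1)$-st factor, the spread section vanishes on all cross-intersections and hence glues with the zero section to give $\tilde{u}$ on $\Gamma_{X(r)}^1(s+1)\cup\Delta_{X(r)}^{1,s+2}\cup\cdots\cup\Delta_{X(r)}^{1,r}$, which is then lifted by descending induction. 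This interleaving of the vanishing theorem with one-diagonal-at-a-time extension is the key idea missing from your proposal; the reducedness and Cohen--Macaulayness points you flag are indeed needed (and are supplied by Corollary \ref{globalpicture} and flatness of $\alpha$), but they are the lesser of the two issues.
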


We keep notation as in the previous subsection, and we furthermore
define by analogy $\Gamma_{X_0 (r)} ^j(s)$ and 
$\Delta_{X_0 (r)}^j (s)$. Consider 

\[
F=L_m \boxtimes L_m \boxtimes \cdots \boxtimes L_m
\]

We show by double induction on $1\leqslant s\leqslant r$ that
\[
H^0(\Gamma_{X(r)}^1 (r), F)\rightarrow H^0(\Gamma^1 _{X(r)}(s) \cup \Delta_{X(r)}^{1,s+1}\cup \cdots \cup\Delta_{X(r)}^{1,r}, F)
\]

is surjective. The case $r=2$ and $s=1$ was done above,
whereas the case $r=2$ and $s=2$ is trivial.
Assume therefore that the result holds for any $1\leqslant s\leqslant r-1$, and let us prove it for $r$. We use descending induction on $s$. The case $s=r$ is obvious. Suppose therefore that $1\leqslant s<r$ and pick a section
\[
u\in H^0(\Gamma^1 _{X(r)}(s)\cup \Delta_{X(r)}^{1,s+1}\cup \cdots \cup \Delta_{X(r)}^{1,r}, F)
\]
By inductive hypothesis, there is a surjection
\[
H^0(\Gamma_{X(r)}^1 (s)
\cup \Gamma_{X(r)}^{1,s+2} \cup \cdots \cup \Gamma_{X(r)}^{1,r}, F)
\rightarrow
H^0(\Gamma_{X(r)}^1 (s)
\cup \Delta_{X(r)}^{1,s+2} \cup \cdots \cup \Delta_{X(r)}^{1,r}, F)
\]

By Theorem \ref{mainvanishing}, there is a surjection
\[
H^0(\Gamma_{X(r)} ^1 (r), F)\rightarrow H^0(\Gamma_{X(r)}^1 (s)
\cup \Gamma_{X(r)}^{1,s+2} \cup \cdots \cup \Gamma_{X(r)}^{1,r}, F)
\]

Therefore, we may assume that $u$ maps to zero in 
\[
H^0(\Gamma_{X(r)}^1 (s)
\cup \Delta_{X(r)}^{1,s+2} \cup \cdots \cup \Delta_{X(r)}^{1,r}, F)
\]

Consider the short exact sequence
\begin{align*}
0& \rightarrow H^0(\Gamma_{X(r)}^1 (s+1)
\cup \Delta_{X(r)}^{1,s+2} \cup \cdots \cup \Delta_{X(r)}^{1,r},F)
\rightarrow\\
& \rightarrow 
H^0(\Gamma_{X(r)}^1 (s)
\cup \Delta_{X(r)}^{1,s+2} \cup \cdots \cup \Delta_{X(r)}^{1,r},F)
\oplus
H^0(\Gamma_{X(r)} ^{1,s+1},F)\rightarrow \\
& \rightarrow H^0((\Gamma_{X(r)}^1 (s)
\cup \Delta_{X(r)}^{1,s+2} \cup \cdots \cup \Delta_{X(r)}^{1,r}) \cap 
\Gamma_{X(r)} ^{1,s+1},F)
\end{align*}

We may extend $u|_{\Delta_{X(r)}^{1,s+1}}$ to $\Gamma_{X(r)}^{1,s+1}$
via the Galois action as in Subsection \ref{subsectionnormality}.
By construction, this patches with
\[
u|_{\Gamma_{X(r)}^1 (s)
\cup \Delta_{X(r)}^{1,s+2} \cup \cdots \cup \Delta_{X(r)}^{1,r}}=0
\]

Therefore, it yields a section 
\[
\tilde{u}\in H^0(\Gamma_{X(r)}^1 (s+1)
\cup \Delta_{X(r)}^{1,s+2} \cup \cdots \cup \Delta_{X(r)}^{1,r},F)
\]

Finally, we may lift $\tilde{u}$ to $\Gamma_{X(r)}^1 (r)$ by 
inductive hypothesis.
Of course, an entirely analogous proof holds for $\Gamma_{X(r)} ^2(r)$.
By the particular case $s=1$, we have a surjective map
\[
H^0(\Gamma_{X(r)} ^j (r), F|_{\Gamma_{X(r)} ^j (r)})
\rightarrow
H^0(\Delta_{X(r)} ^j (r), F|_{\Delta_{X(r)} ^j (r)})
\]

Theorem \ref{maintheorem1} follows by combining the above surjection in the case $j=1$ with Theorem \ref{mainvanishing}.
By using the case $j=2$ instead, we get the following.

\begin{theorem}\label{maintheorem2}
Let $X$ be a smooth complex projective variety of dimension $n$ and let $A$ be an ample and basepoint free divisor.
Then the graded ring of sections of the line bundle
$L_m =\mathcal{O}_X(K_X+mA)$ is Koszul for any $m\geqslant n+2$.
\end{theorem}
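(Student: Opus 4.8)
The plan is to verify the Koszulness criterion of Theorem \ref{mehtadiagonal}, which reduces the problem to two families of vanishing statements on the $r$-fold products: namely $H^1(X(r), \mathcal{I}_{\Delta_{X(r)}}^2(r)\otimes F)=0$ and $H^1(X(r), \mathcal{I}_{\Delta_{X(r)}^{r-1,r}}\otimes F)=0$ for all $r\geqslant 2$ and $k\geqslant 1$, where $F=L_m\boxtimes\cdots\boxtimes L_m^{\otimes k}$. The ideals $\mathcal{I}_\Delta$ of the honest diagonals are not multiplier ideals and resist a direct vanishing theorem, so I would follow the exact strategy of the $N_p$ proof and pass through the enlarged diagonals $\Gamma$ coming from $\mathbb{P}^n$: first prove the vanishing on $\Gamma$, then transfer it to $\Delta$ by the Galois trace argument. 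The crucial structural point is that Theorem \ref{mainvanishing} supplies the $\Gamma^2$-vanishing at the \emph{uniform} bound $m\geqslant n+2$, independent of $r$; this is exactly the feature that yields Koszulness at $m\geqslant n+2$, rather than the $r$-dependent bound $m\geqslant n+r-1$ forced by the $\Gamma^1$ arrangement in the $N_p$ result.

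For the first family I would use the short exact sequence
\[
0\rightarrow \mathcal{I}_{\Delta_{X(r)}}^2(r)\otimes F\rightarrow F\rightarrow F|_{\Delta_{X(r)}^2(r)}\rightarrow 0,
\]
and read off from its long exact sequence that the desired $H^1$ vanishes once $H^0(X(r),F)\rightarrow H^0(\Delta_{X(r)}^2(r),F)$ is surjective and $H^1(X(r),F)=0$. The latter is immediate from K\"unneth together with Kodaira vanishing (Lemma \ref{vanishing1}), since $m\geqslant n+2$. For the surjectivity I would first invoke Theorem \ref{mainvanishing} to make $H^0(X(r),F)\rightarrow H^0(\Gamma_{X(r)}^2(r),F)$ surjective, and then compose with the restriction $H^0(\Gamma_{X(r)}^2(r),F)\rightarrow H^0(\Delta_{X(r)}^2(r),F)$, which is the $j=2$ analogue of the surjection established in the proof of Theorem \ref{maintheorem1}.

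For the second family, the ideal $\mathcal{I}_{\Delta_{X(r)}^{r-1,r}}$ depends only on the last two factors, so K\"unneth factors the cohomology as a product in which every factor except the last two is an $H^0(X,L_m)$, all higher cohomology vanishing by Kodaira. The computation thus collapses to $H^1(X\times X, \mathcal{I}_{\Delta_X}\otimes L_m\boxtimes L_m^{\otimes k})=0$, which is precisely the $N_0$ vanishing already obtained in Theorem \ref{projectivenormality} (using Lemma \ref{finiterestriction} and the $r=2$ case of Theorem \ref{mainvanishing}, and the symmetry of $X\times X$ in the placement of the twist).

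The hard part lies not in this final assembly, which is essentially bookkeeping, but in the two ingredients it rests upon: the $\Gamma^2$-vanishing of Theorem \ref{mainvanishing} at the sharp uniform bound, and the Galois-descent surjectivity $H^0(\Gamma_{X(r)}^2(r),F)\rightarrow H^0(\Delta_{X(r)}^2(r),F)$. The latter must be proved by the same double induction and section-patching argument used for $\Gamma^1$ in Theorem \ref{maintheorem1}: one extends a section on the honest diagonal to the enlarged diagonal through the Galois action, checks that it patches with the vanishing contribution on the remaining components, and lifts it by the inductive hypothesis. I expect verifying that this patching goes through verbatim for the consecutive arrangement $\Delta^2$ — in particular that the relevant intersections and Mayer--Vietoris sequences behave as in the $\Delta^1$ case — to be the only genuinely substantive check.
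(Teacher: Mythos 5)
Your proposal is correct and follows essentially the same route as the paper: the paper also proves Theorem \ref{maintheorem2} by verifying the criterion of Theorem \ref{mehtadiagonal}, using the $\Gamma^2$-part of Theorem \ref{mainvanishing} at the uniform bound $m\geqslant n+2$ and transferring from the enlarged diagonals to the honest ones via the $j=2$ analogue of the double-induction Galois-patching surjection from the proof of Theorem \ref{maintheorem1}. In fact your write-up makes explicit several steps the paper leaves implicit (the short exact sequence assembly, the K\"unneth reduction of the $\mathcal{I}_{\Delta_{X(r)}^{r-1,r}}$ family to the $N_0$ case, and the symmetry in the twist placement), since the paper's own proof consists of the single remark that the $j=2$ case of the surjection yields the result.
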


Ein and Lazarsfeld showed that if $A$ is very ample then one may get a slightly stronger bound provided that $(X,A)\neq (\mathbb{P}^n, H)$. More precisely, they showed that $K_X+mA$ satisfies property $N_p$ for all $m\geqslant n+p$ if $p>0$. It is then natural to ask:

\begin{question}\label{quest2}
Let $X$ be a smooth complex projective variety of dimension $n$ and let
$A$ be an ample and basepoint free divisor. Suppose that
$(X,A)\neq (\mathbb{P}^n, H)$, where $H$ is a hyperplane section.
Does $\mathcal{O}_X(K_X+mA)$ satisfy property $N_p$ for $m\geqslant n+p$
if $p>0$?
\end{question}

\subsection{Examples.}\label{examples} We conclude by
discussing some
examples which show that the hypothesis of Theorem \ref{maintheorem1} 
may not be strengthened beyond Question \ref{quest2}.

Fix integers $n$, $d$ and $r$. 
Let $\alpha: X\rightarrow \mathbb{P}^n$ be the degree $d$
simple cyclic cover ramified along a general hypersurface of
degree $dr$. Let $H$ be a hyperplane section
and let $A=\alpha^* H$. We have
\[
K_X = (-(n+1)+(d-1)r)A
\]

and 
\[
\alpha_* \mathcal{O}_X = 
\mathcal{O}_{\mathbb{P}^n}\oplus \mathcal{O}_{\mathbb{P}^n}(-r)\oplus \cdots \oplus \mathcal{O}_{\mathbb{P}^n}(-(d-1)r)
\]

As usual, we denote $L_m=\mathcal{O}_X(K_X+mA)$.
By the above, we have
\[
L_m = \mathcal{O}_X(K_X+mA)=\mathcal{O}_X((m-n-1+(d-1)r)A)
\]

Now choose $m=n$, $d=2$ and $r=2$.
Then we get $L_n = \mathcal{O}_X(A)$,
\[
H^0(X,L_n)=H^0(\mathbb{P}^n, \mathcal{O}_{\mathbb{P}^n}(1))
\]
and
\[
H^0(X,L_n^{\otimes 2})=H^0(\mathbb{P}^n, \mathcal{O}_{\mathbb{P}^n}(2))\oplus H^0(\mathbb{P}^n, \mathcal{O}_{\mathbb{P}^n})
\]
Therefore $L_n$ is not projectively normal even when $X\neq\mathbb{P}^n$.

Suppose now that $|A|$ realizes $X$ as a double cover of $Y$ with 
$Y\neq \mathbb{P}^n$. It is an open question whether $\mathcal{O}_X(K_X+nA)$ is projectively normal or not. It was shown in \cite{purnachen} however that for Horikawa varieties $|K_X|$ realizes
$X$ as a double cover of a variety of minimal degree $Y$ and $nK_X$ is
not projectively normal. Therefore the bound cannot be lowered to $n-1$ even when $Y\neq \mathbb{P}^n$.

On the other hand, the bounds of Question \ref{quest1} are optimal
if one drops the assumption that $A$ is basepoint free, as already seen
in the case of curves \cite{greenlazarsfeld1}. Going up in dimension,
one may construct examples for which the bound on projective normality and $N_1$ is
optimal in the case of elliptic ruled surfaces (see \cite{purna1}).

Finally, going back to the construction for cyclic covers, choose $n=2$, $m=2$, $d=3$ and
$r=3$. It was shown in \cite[Example 5.2]{purna2} that 
$L_2 = \mathcal{O}_X (K_X+ 2A)$ satisfies property $N_0$ but not $N_1$.

\bibliography{biblio}
\bibliographystyle{alpha}

\end{document}